\newtheorem{thm}{Theorem}[section]
\newtheorem{cor}[thm]{Corollary}
\newtheorem{lemma}[thm]{Lemma}
\newtheorem{prop}[thm]{Proposition}
\theoremstyle{definition}
\newtheorem{defn}[thm]{Definition}
\theoremstyle{notation}
\newtheorem{notation}[thm]{\bf Notation}
\theoremstyle{remark}
\newtheorem{rem}[thm]{Remark}
\numberwithin{equation}{section}
\newtheorem{example}[thm]{Example}
\newcommand{\h}{\mathcal{H}}
\newcommand{\K}{\mathcal{K}}
\newcommand{\vv}[2]{\begin{pmatrix}%
		#1	\\%
		#2	%
\end{pmatrix} }
\DeclareMathOperator{\spn}{span}
\DeclareMathOperator{\ran}{\mathcal{R}}
\begin{document}
	\title[Fibonacci representations of sequences in Hilbert spaces]{Fibonacci representations of sequences in Hilbert spaces}
\author[J. Sedghi Moghaddam, A. Najati and Y. Khedmati ]{J. Sedghi Moghaddam, A. Najati and Y. khedmati}
\address{
 Department of Mathematics
\newline
\indent Faculty of  Sciences
\newline
\indent  University of  Mohaghegh Ardabili
\newline \indent  Ardabil, Iran
  }
\email{\quad j.smoghaddam@uma.ac.ir,\quad a.nejati@yahoo.com, \quad khedmati.y@uma.ac.ir }
\subjclass[2010]{42C15, 47B40}
\keywords{Frame, Operator representation, Fibonacci representation, Basis, Perturbation.}

\begin{abstract}
Dynamical sampling deals with frames of the form $\{T^n\varphi\}_{n=0}^\infty$, where
$T \in B(\h)$ belongs to certain classes of linear operators and $\varphi\in\h$.
The purpose of this paper is to investigate a new representation, namely, Fibonacci representation of sequences $\{f_n\}_{n=1}^\infty$ in a Hilbert space $\h$; having the form $f_{n+2}=T(f_n+f_{n+1})$ for all $n\geqslant 1$ and a linear operator $T :\spn\{f_n\}_{n=1}^\infty\to\spn\{f_n\}_{n=1}^\infty$. We apply this kind of representations for complete sequences and frames. Finally, we present some properties of Fibonacci representation operators.
\end{abstract}
\maketitle
\section{{\textbf Introduction}}
We recall some definitions and standard results from frame theory.
\begin{defn}
	\begin{enumerate}Consider a sequence $F=\{f_i\}_{i=1}^\infty$ in $\h$.
		\item[$(i)$] $F$ is called a \textit{frame} for $\h$, if there exist two constants $A_F, B_F > 0$ such that
		\begin{eqnarray*}\label{abc}
		A_F \|f\|^2\leqslant \sum_{i=1}^\infty |\langle f,f_i\rangle |^2 \leqslant  B_F \|f\|^2,\quad f\in \h.
		\end{eqnarray*}
		\item [$(ii)$] $F$ is called a \textit{Bessel sequence} with Bessel bound $B_F$ if at least the upper frame condition holds.
		\item [$(iii)$] $F$ is called \textit{complete} in $\h$ if $\overline{\spn}\{f_i\}_{i=1}^\infty=\h,$ i.e.,  $\spn\{f_i\}_{i=1}^\infty$ is dense in $\h$.
\item [$(iv)$] $F$ is called
\textit{linearly independent} if $\sum_{k=1}^m c_kf_k=0$  for some scalar
coefficients $\{c_k\}_{k=1}^m$, then $c_k =0$ for all $k = 1,\cdots, m$. We say $F$ is \textit{linearly dependent} if $F$ is not linearly independent.
	\end{enumerate}
\end{defn}
\begin{thm}\cite[Theorem 5.5.1]{Ole_Book}\label{onto}
	A sequence $F=\{f_i\}_{i=1}^\infty\subseteq \h $ is a frame for $\h$ if and only if
	\begin{equation*}
	T_F:\ell^2 \rightarrow\h,\quad T_F(\{c_i\}_{i=1}^\infty )=\sum_{i=1}^\infty  c_if_i,
	\end{equation*}
	is a well-defined mapping from $\ell^2$ onto $\h$. Moreover, the adjoint of $T_F$ is given by
\begin{eqnarray*}
	T^*_F:\h\rightarrow \ell^2 ,\quad T_F^* f=\{\langle f,f_i \rangle\}_{i=1}^\infty .
\end{eqnarray*}
\end{thm}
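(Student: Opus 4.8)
The plan is to prove the two implications separately, with the adjoint formula serving as the bridge in both directions; the frame bounds will then be read off from the boundedness of $T_F$ (for the upper bound) and from a ``bounded below'' property (for the lower bound).

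For the forward implication, assume $F$ is a frame with bounds $A_F,B_F$. First I would check that $T_F$ is well defined and bounded on $\ell^2$: for a finitely supported sequence $\{c_i\}$, Cauchy--Schwarz together with the upper frame inequality gives $\|\sum_i c_if_i\| = \sup_{\|g\|=1}|\sum_i c_i\langle f_i,g\rangle| \leqslant (\sum_i|c_i|^2)^{1/2}\,\sup_{\|g\|=1}(\sum_i|\langle g,f_i\rangle|^2)^{1/2} \leqslant \sqrt{B_F}\,(\sum_i|c_i|^2)^{1/2}$, so $T_F$ extends by density to a bounded operator $\ell^2\to\h$ with $\|T_F\|\leqslant\sqrt{B_F}$. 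The adjoint is identified by the direct computation $\langle T_F\{c_i\},f\rangle = \sum_i c_i\langle f_i,f\rangle = \langle \{c_i\},\{\langle f,f_i\rangle\}_i\rangle_{\ell^2}$, which yields $T_F^*f=\{\langle f,f_i\rangle\}_i$. The lower frame inequality now reads $\|T_F^*f\|^2\geqslant A_F\|f\|^2$. To obtain surjectivity of $T_F$ I would invoke the standard fact that the positive self-adjoint operator $T_FT_F^*$ satisfies $\langle T_FT_F^*f,f\rangle=\|T_F^*f\|^2\geqslant A_F\|f\|^2$, hence is invertible on $\h$, so that $\h=\ran(T_FT_F^*)\subseteq\ran(T_F)$.

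For the converse, assume $T_F$ is a well-defined map of $\ell^2$ onto $\h$; here ``well defined'' means that $\sum_i c_if_i$ converges in $\h$ for every $\{c_i\}\in\ell^2$. Applying the uniform boundedness principle to the partial-sum operators $\{c_i\}\mapsto\sum_{i=1}^N c_if_i$ (each bounded, and converging pointwise to $T_F$) shows $\sup_N\|T_N\|<\infty$, so $T_F$ is bounded; the adjoint computation above still applies, whence $\sum_i|\langle f,f_i\rangle|^2=\|T_F^*f\|^2\leqslant\|T_F\|^2\|f\|^2$, which is the upper frame bound. For the lower bound, surjectivity of the bounded operator $T_F$ gives, by the open mapping theorem, a constant $c>0$ with $\{h\in\h:\|h\|<c\}\subseteq T_F(\{x\in\ell^2:\|x\|\leqslant1\})$; then $\|T_F^*f\| = \sup_{\|x\|\leqslant1}|\langle f,T_Fx\rangle|\geqslant c\|f\|$, and squaring yields the lower frame bound with $A_F=c^2$.

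The only steps that are not pure bookkeeping are the passage from the lower frame bound to surjectivity of $T_F$ (forward direction) and the dual passage from surjectivity back to the lower bound (converse); both rest on functional-analytic staples---invertibility of a bounded-below positive self-adjoint operator, and the open mapping theorem---rather than on anything particular to frame theory. Everything else reduces to Cauchy--Schwarz, the uniform boundedness principle, and the routine adjoint identity. Since this is a quoted result, the written proof can cite these standard facts and remain brief.
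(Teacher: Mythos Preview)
Your proof is correct and follows the standard argument one finds in the reference; the paper itself does not supply a proof of this theorem, as it is simply quoted from \cite[Theorem~5.5.1]{Ole_Book}. There is therefore nothing to compare against---your sketch is precisely the kind of brief justification appropriate for a cited background result, and indeed you already note this at the end.
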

It is well known that if $F=\{f_i\}_{i=1}^\infty\subseteq \h $ is a Bessel sequence for $\h$, then $T_F$ is well-defined and bounded.
\par
In \cite{Ald_Acha}, Aldroubi et al. introduced  dynamic sampling which it deals with frame properties of sequences of the form $\{T^n\varphi\}_{n=0}^\infty$, where
$T \in B(\h)$ belongs to certain classes of linear operators (such as diagonalizable normal operators) and $\varphi\in\h$.
 Various characterizations of frames having the form
$ \{f_{k}\}_{k\in I}=\{T^k\varphi\}_{k\in I},$
 where $T$ is a linear (not necessarily bounded) operator can be found in
\cite{O.M_Int, O.M_Acha,  O.M.E_Jmaa, RNO}.
\begin{prop}\cite[Proposition 2.3]{O.M_Acha}\label{mnref}
	Consider a frame sequence $F=\{f_i\}_{i=1}^\infty$ in a Hilbert space $\h$  which spans an infinite-dimensional subspace.
	The following are equivalent:
	\begin{enumerate}
		\item[$(i)$] $F$ is linearly independent.
		\item[$(ii)$] There exists a linear operator $T:\spn\{f_i\}_{i=1}^\infty\rightarrow \h$ such that $Tf_i:=f_{i+1}$.
	\end{enumerate}
\end{prop}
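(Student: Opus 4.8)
The plan is to establish the two implications separately; both are essentially linear-algebraic, and the frame hypothesis enters only through the standing assumption that $V:=\spn\{f_i\}_{i=1}^\infty$ is infinite-dimensional.

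For $(i)\Rightarrow(ii)$, I would observe that if $F$ is linearly independent then $\{f_i\}_{i=1}^\infty$ is a Hamel basis of $V$, so every $v\in V$ admits a \emph{unique} expression $v=\sum_{k=1}^{m}c_kf_k$ as a finite linear combination of the $f_k$'s. One then simply sets $Tv:=\sum_{k=1}^{m}c_kf_{k+1}$; uniqueness of the representation is exactly what is needed for $T$ to be well defined, linearity is immediate from the definition, and $Tf_i=f_{i+1}$ holds by construction. No boundedness is claimed, so there is nothing further to verify here.

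For $(ii)\Rightarrow(i)$, I would argue by contraposition: assuming such a linear $T$ exists and $F$ is linearly \emph{dependent}, I will show that $V$ is finite-dimensional, contradicting the hypothesis. Pick a nontrivial finite relation and let $m$ be the largest index occurring with a nonzero coefficient, so that $\sum_{k=1}^{m}c_kf_k=0$ with $c_m\neq0$; dividing by $c_m$ gives $f_m\in W:=\spn\{f_1,\dots,f_{m-1}\}$ (with $W=\{0\}$ if $m=1$). Applying the linear operator $T^{j}$ to this relation and using $Tf_k=f_{k+1}$ yields $\sum_{k=1}^{m}c_kf_{k+j}=0$ for every $j\geqslant0$, i.e. $f_{m+j}=-c_m^{-1}\sum_{k=1}^{m-1}c_kf_{k+j}$. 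A strong induction on $n$ then gives $f_n\in W$ for all $n\geqslant1$: the cases $n\leqslant m-1$ are trivial, and for $n=m+j$ every index $k+j$ on the right-hand side satisfies $k+j\leqslant m-1+j<n$, so is covered by the inductive hypothesis. Hence $V=W$ has dimension at most $m-1$, a contradiction.

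The part requiring the most care — and what I regard as the main, if modest, obstacle — is the induction in $(ii)\Rightarrow(i)$: it should be organized as a single strong induction on $n$ for the statement ``$f_n\in W$'', rather than tracking the shifted relations $\sum_k c_kf_{k+j}=0$ one index at a time, and one must not forget the base cases $f_1,\dots,f_m$. Everything else — well-definedness in $(i)\Rightarrow(ii)$ and the reindexing bookkeeping — is routine.
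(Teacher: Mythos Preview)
Your argument is correct, but note that the paper does not actually prove this proposition: it is quoted verbatim from \cite[Proposition~2.3]{O.M_Acha} as background and is not accompanied by a proof in the present paper. What you have written is the standard linear-algebraic argument (define $T$ on the Hamel basis in one direction; propagate a dependence relation by iterating $T$ in the other), and it is exactly the proof one finds in the cited source. The edge case $m=1$ (so $f_1=0$ and hence all $f_n=T^{n-1}f_1=0$) is handled by your convention $W=\{0\}$, and the strong induction is organized correctly.
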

\begin{thm}\cite[Theorem 2.1]{O.M.E_Jmaa}\label{exbnd}
	Consider a frame $F=\{f_i\}_{i=1}^\infty$ in $\h$. Then the following are equivalent:
	\begin{enumerate}
		\item[$(i)$] $F=\{T^{i-1} f_1\}_{i=1}^\infty$ for some $T\in B(\h)$.
		\item[$(ii)$] For some dual frame $G=\{g_i\}_{i=1}^\infty$ (and hence all)
		\begin{align*}
		f_{j+1}=\sum_{i=1}^\infty\langle f_j,g_i\rangle f_{i+1},\forall j\in\mathbb{N}.
		\end{align*}
		\item[$(iii)$] The $\ker T_F$ is invariant under the right-shift operator $\mathcal{T}:\ell^2\to\ell^2$ defined by \[\mathcal{T}(c_1, c_2,\cdots)=(0, c_1, c_2,\cdots).\]
	\end{enumerate}
\end{thm}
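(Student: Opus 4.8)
The plan is to establish the equivalence through the cycle $(i)\Rightarrow(ii)\Rightarrow(iii)\Rightarrow(i)$; this also automatically justifies the parenthetical ``(and hence all)'' in $(ii)$, since once $(i)$ is reached we will have proved $(i)\Rightarrow(ii)$ for \emph{every} dual frame $G$, not merely the one we started with. Throughout I use that $T_F:\ell^2\to\h$ is bounded and onto with $T_F^*f=\{\langle f,f_i\rangle\}_{i=1}^\infty$ (Theorem \ref{onto}), that $\{f_i\}_{i\geqslant 2}$ is again Bessel, and that a dual frame $G=\{g_i\}_{i=1}^\infty$ gives $f=\sum_{i=1}^\infty\langle f,g_i\rangle f_i$ for all $f\in\h$.

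For $(i)\Rightarrow(ii)$: given $T\in B(\h)$ with $f_i=T^{i-1}f_1$, fix any dual frame $G$ and apply $T$ to $f=\sum_i\langle f,g_i\rangle f_i$; boundedness of $T$ lets me pull it inside the sum, and $Tf_i=T^if_1=f_{i+1}$, so $Tf=\sum_i\langle f,g_i\rangle f_{i+1}$ for all $f$. Taking $f=f_j$ and using $Tf_j=f_{j+1}$ gives $(ii)$. For $(ii)\Rightarrow(iii)$: let $c=\{c_i\}\in\ker T_F$, so $\sum_i c_if_i=0$; I must show $\sum_i c_if_{i+1}=0$, i.e. $\mathcal{T}c\in\ker T_F$. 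Plugging the $(ii)$-identity for $f_{j+1}$ into $\sum_j c_jf_{j+1}$ and swapping the order of summation — legitimate since $\{c_j\}\in\ell^2$ and $\{f_i\}_{i\geqslant 2}$ is Bessel — turns it into $\sum_i\big\langle\sum_j c_jf_j,\,g_i\big\rangle f_{i+1}=\sum_i\langle 0,g_i\rangle f_{i+1}=0$.

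The substantial implication is $(iii)\Rightarrow(i)$. Introduce $U:\ell^2\to\h$, $Uc=\sum_i c_if_{i+1}$; then $U=T_F\circ\mathcal{T}$, so $U$ is bounded, and $c\in\ker U$ if and only if $\mathcal{T}c\in\ker T_F$. Hence condition $(iii)$ is \emph{exactly} the kernel inclusion $\ker T_F\subseteq\ker U$, which is precisely what makes $Tf:=Uc$ (for any $c\in\ell^2$ with $T_Fc=f$) a well-defined linear map on $\ran T_F=\h$. Taking $c=e_i$, the $i$-th standard basis vector of $\ell^2$, which satisfies $T_Fe_i=f_i$, gives $Tf_i=Ue_i=f_{i+1}$, and iteration yields $f_i=T^{i-1}f_1$, establishing the operator form. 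It remains to check $T\in B(\h)$: since $T_F$ is bounded and surjective, every $f\in\h$ has a preimage $c$ with $\|c\|\leqslant A_F^{-1/2}\|f\|$ (e.g. $c=T_F^*(T_FT_F^*)^{-1}f$, using $T_FT_F^*\geqslant A_F I$; or invoke the open mapping theorem), whence $\|Tf\|=\|Uc\|\leqslant\|U\|\,A_F^{-1/2}\,\|f\|$.

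I expect the only genuine obstacle to be the well-definedness-and-boundedness of $T$ in $(iii)\Rightarrow(i)$: recognizing that condition $(iii)$ is the kernel inclusion $\ker T_F\subseteq\ker U$ is the conceptual step, and boundedness then rests on surjectivity of $T_F$ together with the open mapping theorem (or an explicit pseudo-inverse estimate via the frame operator). The remaining implications are routine, the only care needed being the interchange of summations in $(ii)\Rightarrow(iii)$, which the Bessel property supplies.
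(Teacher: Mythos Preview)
Your argument is correct in all three implications, including the key step of recognizing $(iii)$ as the kernel inclusion $\ker T_F\subseteq\ker U$ that makes $T$ well defined, and then bounding $T$ via the pseudo-inverse of the surjective synthesis operator. However, there is nothing in the present paper to compare your proof against: Theorem~\ref{exbnd} is quoted from \cite{O.M.E_Jmaa} as background in the introduction and is not proved here. Your proof is essentially the standard one found in that reference.
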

\section{Special sequences }
It is well known, cf. \cite[Example 5.4.6]{Ole_Book} that if $\{e_n\}_{n=1}^\infty$ is an orthonormal basis $\{e_n\}_{n=1}^\infty$ for $\h$, then $\{e_n+e_{n+1}\}_{n=1}^\infty$ is complete and a Bessel sequence but  not a frame. This motivates us to  investigate some results concerning the sequences $F=\{f_n\}_{n=1}^\infty$, $M=\{f_n+f_{n+1}\}_{n=1}^\infty$ and $N=\{f_n-f_{n-1}\}_{n=1}^\infty$ in a Hilbert space $\h$.
\begin{prop}\label{lem1}
	Let $\alpha$ and $\beta$ be nonzero scalers and $F=\{f_n\}_{n=1}^\infty \subseteq \h$. Then
	\begin{enumerate}
		\item [$(i)$] $F$ is a Bessel sequence for $\h$, if and only if  $M=\{\alpha f_n +\beta f_{n+1}\}_{n=1}^\infty$ and $N=\{\alpha f_n -\beta f_{n+1}\}_{n=1}^\infty$ are Bessel sequences for $\h$.
		\item [$(ii)$] Suppose that $F$ is a Bessel sequence for $\h$. Then $F$ is complete, if and only if $M=\{\alpha f_n +\beta f_{n+1}\}_{n=1}^\infty$ is complete, whenever $|\alpha|\geqslant  |\beta|$.
	\end{enumerate}
\end{prop}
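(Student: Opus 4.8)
The plan is to handle part $(i)$ by elementary norm estimates and part $(ii)$ by converting the orthogonality relation for $M$ into a linear recursion for the numbers $\langle f, f_n\rangle$.

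For $(i)$, in the forward direction I would assume $F$ is Bessel with bound $B_F$ and, for $f\in\h$, estimate via $|a+b|^2\le 2|a|^2+2|b|^2$ that
\[
\sum_{n=1}^\infty|\langle f,\alpha f_n+\beta f_{n+1}\rangle|^2\le 2|\alpha|^2\sum_{n=1}^\infty|\langle f,f_n\rangle|^2+2|\beta|^2\sum_{n=1}^\infty|\langle f,f_{n+1}\rangle|^2\le 2(|\alpha|^2+|\beta|^2)B_F\|f\|^2,
\]
with the identical bound for $N$; hence $M$ and $N$ are Bessel. For the converse I would use the identity $\langle f,\alpha f_n+\beta f_{n+1}\rangle+\langle f,\alpha f_n-\beta f_{n+1}\rangle=2\overline{\alpha}\langle f,f_n\rangle$ together with $|a+b|^2\le 2|a|^2+2|b|^2$ to obtain $\sum_n|\langle f,f_n\rangle|^2\le\frac{1}{2|\alpha|^2}(B_M+B_N)\|f\|^2$, so $F$ is Bessel; this is the point where $\alpha\neq0$ is used.

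For $(ii)$, the implication ``$M$ complete $\Rightarrow$ $F$ complete'' is immediate since each $\alpha f_n+\beta f_{n+1}\in\overline{\spn}\{f_k\}_{k=1}^\infty$, giving $\h=\overline{\spn}\,M\subseteq\overline{\spn}\,F$. For the reverse implication I would assume $F$ complete, $|\alpha|\ge|\beta|$, and take $f\in\h$ with $\langle f,\alpha f_n+\beta f_{n+1}\rangle=0$ for all $n\ge1$. Writing $c_n:=\langle f,f_n\rangle$, this becomes $\overline{\alpha}c_n+\overline{\beta}c_{n+1}=0$, so $c_{n+1}=-(\overline{\alpha}/\overline{\beta})c_n$ and $|c_n|=(|\alpha|/|\beta|)^{n-1}|c_1|$ for all $n$. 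Since $F$ is a Bessel sequence, $\{c_n\}_{n=1}^\infty=\{\langle f,f_n\rangle\}_{n=1}^\infty\in\ell^2$; but for $|\alpha|\ge|\beta|$ the sequence $(|c_n|)_{n\ge1}$ is nondecreasing, so square-summability forces $c_1=0$, whence $c_n=0$ for all $n$. Then $f\perp\spn\{f_n\}_{n=1}^\infty$, and completeness of $F$ gives $f=0$, so $M$ is complete.

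I expect the only subtle step to be this reverse implication in $(ii)$: one must recognize that the Bessel hypothesis is precisely what guarantees $\{\langle f,f_n\rangle\}\in\ell^2$, and that the assumption $|\alpha|\ge|\beta|$ is precisely what makes the geometric solution of the recursion fail to lie in $\ell^2$ unless it is the zero sequence. Both hypotheses are thus used in an essential way and cannot be dropped. Everything else is routine, and I would present part $(i)$ quickly.
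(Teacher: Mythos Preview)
Your proposal is correct and follows essentially the same approach as the paper's proof. The only cosmetic difference is in the forward direction of $(i)$: the paper invokes the parallelogram identity $|a+b|^2+|a-b|^2=2|a|^2+2|b|^2$ to bound $M$ and $N$ simultaneously, whereas you bound each separately via $|a\pm b|^2\le 2|a|^2+2|b|^2$; the converse of $(i)$ and both directions of $(ii)$ match the paper almost line for line.
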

\begin{proof}
	\begin{enumerate}
		\item [$(i)$]  Assume that $\{f_n\}_{n=1}^\infty$ is a Bessel sequence with  Bessel bound $B_F$ and $\mu= \max\{|\alpha|^2,|\beta|^2\}$. Then for $f\in\h$, we have
		\begin{align*}
		\sum_{n=1}^\infty |\langle f, \alpha f_n+\beta f_{n+1}\rangle|^2&+\sum_{n=1}^\infty |\langle f, \alpha f_n-\beta f_{n+1}\rangle|^2\\
		&= 2|\alpha|^2 \sum_{n=1}^\infty |\langle f, f_n\rangle|^2 +2|\beta|^2\sum_{n=1}^\infty |\langle f, f_{n+1}\rangle|^2\\
		& \leqslant  4 \mu \sum_{n=1}^\infty |\langle f, f_n\rangle|^2 \leqslant  4\mu B_F \|f\|^2.
		\end{align*}
		Then $M$ and $N$ are Bessel sequences. For the opposite implication, let $B_M$ and $B_N$ be Bessel bounds for sequences $M$ and $N$, respectively. Then
		\begin{align*}
		2|\alpha|^2\sum_{n=1}^\infty |\langle f, f_n\rangle|^2&\leqslant  2  \sum_{n=1}^\infty |\langle f, \alpha f_n\rangle|^2+2\sum_{n=1}^\infty |\langle f, \beta f_{n+1}\rangle|^2\\
		&=\sum_{n=1}^\infty |\langle f, \alpha f_n +\beta f_{n+1} \rangle|^2 +\sum_{n=1}^\infty |\langle f, \alpha f_n -\beta f_{n+1}\rangle|^2\\
		&\leqslant  (B_M+B_N)\|f\|^2, \quad f\in \h.
		\end{align*}
		\item [$(ii)$] Suppose that $F$ is complete and $f\in \h$ such that $\langle f, \alpha f_n +\beta f_{n+1}\rangle =0$ for all $n\in \mathbb{N}$. Then $\overline{\alpha}\langle f, f_n\rangle=-\overline{\beta} \langle f, f_{n+1}\rangle$ for all $n\in \mathbb{N}$. Since $|\alpha|\geqslant |\beta|$ and
\[|\langle f,f_1 \rangle|^2 \sum_{n=0}^\infty \big|\dfrac{\alpha}{\beta}\big|^n =\sum_{n=1}^\infty |\langle f, f_n\rangle|^2 \leqslant  B_F \|f\|^2,\]
		we get $\langle f, f_1\rangle=0$ and consequently  $\langle f, f_n\rangle=0$ for $n\in\mathbb{N}$. Hence $f=0$ and this shows that $\{\alpha f_n +\beta f_{n+1}\}_{n=1}^\infty$ is complete. In order to show the other implication, assume that $M$ is complete and $f\in \h$ such that $\langle f,  f_n \rangle =0$ for all $n\in \mathbb{N}$. Since
\[\langle f, \alpha f_n +\beta f_{n+1}\rangle =\overline{\alpha}\langle f, f_n\rangle+\overline{\beta} \langle f, f_{n+1}\rangle=0,\quad n\in\mathbb{N},\]
		we conclude that $f=0$ and therefore $F$ is complete.
	\end{enumerate}
\end{proof}
\begin{prop}
	Let $F=\{f_n\}_{n=1}^\infty $, $M=\{\alpha f_n +\beta f_{n+1}\}_{n=1}^\infty$ and $N=\{\alpha f_n -\beta f_{n+1}\}_{n=1}^\infty $ be sequences in a Hilbert space $\h$ and $\alpha\neq0$. Then $F$ is a frame for $\h$, if and only if $M\cup N$ is a frame for $\h$.
\end{prop}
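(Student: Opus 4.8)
The plan is to reduce the whole statement to one pointwise (in $f\in\h$) identity, in the same spirit as the proof of Proposition~\ref{lem1}$(i)$. Fix $f\in\h$. For each $n\geqslant 1$, apply the parallelogram identity $\abs{a+b}^2+\abs{a-b}^2=2\abs{a}^2+2\abs{b}^2$ with $a=\overline{\alpha}\langle f,f_n\rangle$ and $b=\overline{\beta}\langle f,f_{n+1}\rangle$ to obtain
\[
\abs{\langle f,\alpha f_n+\beta f_{n+1}\rangle}^2+\abs{\langle f,\alpha f_n-\beta f_{n+1}\rangle}^2
=2\abs{\alpha}^2\abs{\langle f,f_n\rangle}^2+2\abs{\beta}^2\abs{\langle f,f_{n+1}\rangle}^2 .
\]
Put $\Sigma_F(f)=\sum_{n=1}^\infty\abs{\langle f,f_n\rangle}^2$ and let $\Sigma_{M\cup N}(f)=\sum_{g\in M\cup N}\abs{\langle f,g\rangle}^2$ be the sum of the left-hand sides over $n\geqslant 1$. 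Summing the identity over $n$ and using $\sum_{n=1}^\infty\abs{\langle f,f_{n+1}\rangle}^2=\Sigma_F(f)-\abs{\langle f,f_1\rangle}^2$ gives
\[
\Sigma_{M\cup N}(f)=2\big(\abs{\alpha}^2+\abs{\beta}^2\big)\,\Sigma_F(f)-2\abs{\beta}^2\abs{\langle f,f_1\rangle}^2 .
\]
Since all summands are nonnegative, the term-by-term identity already shows $\Sigma_{M\cup N}(f)\geqslant 2\abs{\alpha}^2\,\Sigma_F(f)$, so finiteness of one of $\Sigma_F(f),\Sigma_{M\cup N}(f)$ forces the other, and the displayed identity is meaningful (no $\infty-\infty$).

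Next I would absorb the boundary term using $0\leqslant\abs{\langle f,f_1\rangle}^2\leqslant\Sigma_F(f)$, which upgrades the identity to the two-sided estimate
\[
2\abs{\alpha}^2\,\Sigma_F(f)\ \leqslant\ \Sigma_{M\cup N}(f)\ \leqslant\ 2\big(\abs{\alpha}^2+\abs{\beta}^2\big)\,\Sigma_F(f),\qquad f\in\h .
\]
This is where the hypothesis $\alpha\neq0$ enters decisively: it makes the left-hand constant $2\abs{\alpha}^2$ strictly positive, so $\Sigma_F$ and $\Sigma_{M\cup N}$ are equivalent quadratic forms on $\h$; note that no condition on $\beta$ is needed, in contrast with Proposition~\ref{lem1}, where the lower estimate for $M$ alone required $\abs{\alpha}\geqslant\abs{\beta}$.

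The conclusion is then immediate. If $F$ is a frame with bounds $A_F,B_F$, substituting $A_F\norm{f}^2\leqslant\Sigma_F(f)\leqslant B_F\norm{f}^2$ into the last display shows $M\cup N$ is a frame with bounds $2\abs{\alpha}^2A_F$ and $2(\abs{\alpha}^2+\abs{\beta}^2)B_F$. Conversely, if $M\cup N$ is a frame with bounds $A,B$, dividing the last display yields $\frac{A}{2(\abs{\alpha}^2+\abs{\beta}^2)}\norm{f}^2\leqslant\Sigma_F(f)\leqslant\frac{B}{2\abs{\alpha}^2}\norm{f}^2$, so $F$ is a frame. I do not expect any genuine obstacle here; the only steps deserving a line of justification are the telescoping re-indexing (covered by the nonnegativity remark above) and isolating the role of $\alpha$ in the lower bound.
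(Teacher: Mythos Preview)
Your proof is correct and follows essentially the same route as the paper's: both hinge on the parallelogram identity applied termwise and then summed to sandwich $\Sigma_{M\cup N}(f)$ between constant multiples of $\Sigma_F(f)$. Your version is slightly more detailed (isolating the boundary term $\abs{\langle f,f_1\rangle}^2$ and the finiteness check) and yields the marginally sharper upper constant $2(\abs{\alpha}^2+\abs{\beta}^2)$ in place of the paper's $4\max\{\abs{\alpha}^2,\abs{\beta}^2\}$, but the argument is the same.
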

\begin{proof} Let $\mu= \max\{|\alpha|^2,|\beta|^2\}$. Then the result  follows from
	\begin{align*}
 |\alpha|^2\sum_{n=1}^\infty |\langle f, f_n\rangle|^2&\leqslant  2  \sum_{n=1}^\infty |\langle f, \alpha f_n\rangle|^2+2\sum_{n=1}^\infty |\langle f, \beta f_{n+1}\rangle|^2\\
	&=\sum_{n=1}^\infty |\langle f, \alpha f_n +\beta f_{n+1} \rangle|^2 +\sum_{n=1}^\infty |\langle f, \alpha f_n -\beta f_{n+1}\rangle|^2\\
	&= 2|\alpha|^2 \sum_{n=1}^\infty |\langle f, f_n\rangle|^2 +2|\beta|^2\sum_{n=1}^\infty |\langle f, f_{n+1}\rangle|^2\\
	& \leqslant  4 \mu \sum_{n=1}^\infty |\langle f, f_n\rangle|^2, \quad f\in\h.
	\end{align*}
\end{proof}
\begin{thm}
	Let $M=\{f_n +f_{n+1}\}_{n=1}^\infty$ and $N=\{f_n-f_{n+1}\}_{n=1}^\infty$ be frames for $\h$. Then $F=\{f_n\}_{n=1}^\infty$ is a frame for $\h$ and
	\begin{equation}\label{S_f=}
	4S_Ff=S_Mf+S_Nf+2\langle f, f_1\rangle f_1, \quad f\in \h,
	\end{equation}
where $S_F, S_M$ and $S_N$ are frame operators for $F, M$ and $N$, respectively.
\end{thm}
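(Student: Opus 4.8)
The plan is to push everything through two elementary identities: the parallelogram law $|a+b|^2+|a-b|^2=2|a|^2+2|b|^2$ applied to the scalars $a=\langle f,f_n\rangle$, $b=\langle f,f_{n+1}\rangle$ to produce the frame inequalities for $F$, and the vector identity $(a+b)(u+v)+(a-b)(u-v)=2au+2bv$ applied with $u=f_n$, $v=f_{n+1}$ to produce \eqref{S_f=}.

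First I would check that $F$ is a frame. Since $M$ and $N$ are frames they are Bessel sequences, so Proposition~\ref{lem1}$(i)$ with $\alpha=\beta=1$ already gives that $F$ is Bessel. For the lower bound I would sum the pointwise identity
\[
|\langle f,f_n+f_{n+1}\rangle|^2+|\langle f,f_n-f_{n+1}\rangle|^2=2|\langle f,f_n\rangle|^2+2|\langle f,f_{n+1}\rangle|^2
\]
over $n\geqslant1$ and reindex the last sum, obtaining
\[
\sum_{n=1}^\infty|\langle f,f_n+f_{n+1}\rangle|^2+\sum_{n=1}^\infty|\langle f,f_n-f_{n+1}\rangle|^2=4\sum_{n=1}^\infty|\langle f,f_n\rangle|^2-2|\langle f,f_1\rangle|^2,
\]
so that $4\sum_n|\langle f,f_n\rangle|^2\geqslant(A_M+A_N)\|f\|^2$, i.e. $F$ is a frame with lower bound $(A_M+A_N)/4$.

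For the operator identity I would argue with partial sums. Fixing $K\in\mathbb{N}$ and applying the vector identity termwise,
\[
\sum_{n=1}^K\Big[\langle f,f_n+f_{n+1}\rangle(f_n+f_{n+1})+\langle f,f_n-f_{n+1}\rangle(f_n-f_{n+1})\Big]=\sum_{n=1}^K\big[2\langle f,f_n\rangle f_n+2\langle f,f_{n+1}\rangle f_{n+1}\big],
\]
and the right-hand side rearranges (telescopes) to $4\sum_{n=1}^K\langle f,f_n\rangle f_n-2\langle f,f_1\rangle f_1+2\langle f,f_{K+1}\rangle f_{K+1}$. Letting $K\to\infty$, the left side converges to $S_Mf+S_Nf$; on the right, $\sum_{n=1}^K\langle f,f_n\rangle f_n\to S_Ff$ because $F$ is Bessel, while the boundary term $\langle f,f_{K+1}\rangle f_{K+1}\to0$ since the Bessel property forces $\sup_n\|f_n\|<\infty$ and $\langle f,f_n\rangle\to0$. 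This gives $S_Mf+S_Nf=4S_Ff-2\langle f,f_1\rangle f_1$, which is \eqref{S_f=}.

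The only point needing any care --- and it is mild --- is the order of the argument in the last step: one cannot split $\sum_n[2\langle f,f_n\rangle f_n+2\langle f,f_{n+1}\rangle f_{n+1}]$ into two separately convergent series before knowing $F$ is Bessel, so the frame claim must be established first; and one must keep the stray boundary term and check that it vanishes in the limit. Beyond that, the whole argument is the two displayed identities.
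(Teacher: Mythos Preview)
Your proof is correct. The frame-bound half is identical to the paper's: both invoke Proposition~\ref{lem1}$(i)$ for the Bessel bound and the scalar parallelogram identity for the lower bound. The difference is in how the operator identity \eqref{S_f=} is obtained. The paper stays at the level of quadratic forms: from
\[
\langle S_Mf,f\rangle+\langle S_Nf,f\rangle=4\langle S_Ff,f\rangle-2\langle\langle f,f_1\rangle f_1,f\rangle,\qquad f\in\h,
\]
it passes directly to \eqref{S_f=}, tacitly using that a bounded self-adjoint operator on a complex Hilbert space is determined by its quadratic form. You instead compute the frame-operator series termwise via the vector identity $(a+b)(u+v)+(a-b)(u-v)=2au+2bv$, work with partial sums, and kill the stray boundary term $2\langle f,f_{K+1}\rangle f_{K+1}$ using the Bessel property. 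Your route is slightly longer but more self-contained (no appeal to polarization), and it makes the convergence issues explicit; the paper's route is shorter but leans on an unstated standard fact. Both are perfectly valid.
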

\begin{proof} By Proposition \ref{lem1}, $F$ is a Bessel sequence for $\h$. Let $A_M$ and $A_N$ be lower frame bounds for $M$ and $N$, respectively. Then we have
	\begin{align*}
	(A_M+A_N)\|f\|^2&\leqslant  \sum_{n=1}^\infty |\langle f, f_n+f_{n+1}\rangle|^2+\sum_{n=1}^\infty |\langle f, f_n-f_{n+1}\rangle|^2 \nonumber\\
	&= 2 \sum_{n=1}^\infty |\langle f, f_n\rangle|^2+ 2 \sum_{n=1}^\infty |\langle f, f_{n+1}\rangle|^2 \label{combes}\\
	&\leqslant  4 \sum_{n=1}^\infty |\langle f, f_n\rangle|^2,  \quad f\in \h \nonumber.
	\end{align*}
	Therefore, $F$ is  a frame for $\h$. Furthermore, since
\begin{align*}
	&\sum_{n=1}^\infty |\langle f, f_n+f_{n+1}\rangle|^2+\sum_{n=1}^\infty |\langle f, f_n-f_{n+1}\rangle|^2 \nonumber\\
	&= 2 \sum_{n=1}^\infty |\langle f, f_n\rangle|^2+ 2 \sum_{n=1}^\infty |\langle f, f_{n+1}\rangle|^2,  \quad f\in \h,
	\end{align*}
we obtain
	\begin{align*}
	\langle S_Mf ,f\rangle +\langle S_Nf ,f\rangle&=4\langle S_Ff ,f\rangle-2|\langle f ,f_1\rangle|^2\\
	&=4\langle S_Ff ,f\rangle -2\langle \langle f ,f_1\rangle f_1 ,f\rangle, \quad f\in \h.
	\end{align*}
	Then we obtain (\ref{S_f=}).
\end{proof}
\begin{thm}
	Let $M=\{f_n+f_{n+1}\}_{n=1}^\infty$, $N=\{f_n-f_{n+1}\}_{n=1}^\infty$ and $F=\{f_n\}_{n=1}^\infty$ be Bessel sequences for $\h$ and $\ker T_F$ be invariant under $\mathcal{T}_R$. Then
$\ker T_F=\ker T_M\cap\ker T_N$.
\end{thm}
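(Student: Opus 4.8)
The plan is to realize the synthesis operators of $M$ and $N$ through that of $F$ together with the right-shift $\mathcal{T}_R$ on $\ell^2$. First I would check, for a finitely supported scalar sequence $c=\{c_n\}_{n=1}^\infty$, that reindexing the second sum gives
\[
\sum_{n=1}^\infty c_n(f_n+f_{n+1})=\sum_{n=1}^\infty c_nf_n+\sum_{n=1}^\infty c_nf_{n+1}=T_F\big(c+\mathcal{T}_Rc\big),
\]
and likewise $\sum_{n=1}^\infty c_n(f_n-f_{n+1})=T_F(c-\mathcal{T}_Rc)$. Since $F$, $M$ and $N$ are Bessel sequences, the operators $T_F,T_M,T_N:\ell^2\to\h$ are bounded, and $\mathcal{T}_R$ is a bounded (isometric) operator on $\ell^2$; as the finitely supported sequences are dense in $\ell^2$, I would conclude that the identities
\[
T_M=T_F(I+\mathcal{T}_R),\qquad T_N=T_F(I-\mathcal{T}_R)
\]
hold on all of $\ell^2$ by continuity.

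Granting these identities, the inclusion $\ker T_F\subseteq\ker T_M\cap\ker T_N$ is the one that uses the hypothesis on $\mathcal{T}_R$: if $c\in\ker T_F$, then by invariance $\mathcal{T}_Rc\in\ker T_F$ as well, so $T_Mc=T_Fc+T_F\mathcal{T}_Rc=0$ and $T_Nc=T_Fc-T_F\mathcal{T}_Rc=0$, whence $c\in\ker T_M\cap\ker T_N$.

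For the reverse inclusion, which should need no extra hypothesis, I would take $c\in\ker T_M\cap\ker T_N$; then $T_F(c+\mathcal{T}_Rc)=0$ and $T_F(c-\mathcal{T}_Rc)=0$, and adding these yields $2T_Fc=0$, i.e.\ $c\in\ker T_F$. Combining the two inclusions gives $\ker T_F=\ker T_M\cap\ker T_N$.

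The only point I expect to require a little care is the passage from finitely supported sequences to all of $\ell^2$ in the two operator identities; once that density/continuity argument is in place the rest is immediate, and it is worth noting that the invariance of $\ker T_F$ under $\mathcal{T}_R$ is used only for the inclusion $\ker T_F\subseteq\ker T_M\cap\ker T_N$.
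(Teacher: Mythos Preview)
Your proof is correct and follows essentially the same approach as the paper: both arguments rest on the identity $T_Mc=T_F(c+\mathcal{T}_Rc)$ (and its analogue for $N$), use the $\mathcal{T}_R$-invariance of $\ker T_F$ for the inclusion $\ker T_F\subseteq\ker T_M\cap\ker T_N$, and obtain the reverse inclusion by adding $T_Mc$ and $T_Nc$. The only cosmetic difference is that you package the identity as $T_M=T_F(I+\mathcal{T}_R)$ and pass through a density argument, whereas the paper writes out the shifted sequence $\{c_1,c_1+c_2,c_2+c_3,\dots\}$ explicitly.
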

\begin{proof}
	Let $\{c_n\}_{n=1}^\infty  \in \ker T_F$. Since $\ker T_F$ is invariant under $\mathcal{T}_R$, we get $\{0,c_1,c_2,...\} \in \ker T_F$. Therefore
$\{c_1,c_1+c_2,c_2+c_3,...\} , \{c_1,c_2-c_1,c_3-c_2,...\} \in \ker T_F$. Hence
	\begin{align*}
	\sum_{n=1}^\infty c_n (f_n+f_{n+1})&=\sum_{n=1}^\infty (c_n+c_{n+1})f_{n+1} +c_1 f_1 =0,\\
\sum_{n=1}^\infty c_n (f_n-f_{n+1})&=\sum_{n=1}^\infty (c_{n+1}-c_{n})f_{n+1} +c_1 f_1=0.
	\end{align*}
	Then we conclude  $\{c_n\}_{n=1}^\infty  \in \ker T_M\cap \ker T_N$.
	On the other hand, if $\{c_n\}_{n=1}^\infty  \in \ker T_M\cap \ker T_N$, then we have
	\begin{equation*}
	0=\sum_{n=1}^\infty c_n (f_n-f_{n+1})+\sum_{n=1}^\infty c_n (f_n+f_{n+1})=2\sum_{n=1}^\infty c_n f_n.
	\end{equation*}
	Therefore, $ \{c_n\}_{n=1}^\infty  \in \ker T_F$.
\end{proof}
\section{Fibonacci representation}
In this section we want to consider representation of a sequence $\{f_n\}_{n=1}^\infty\subseteq\h$ on the form $f_n=T(f_{n-1}+f_{n-2})$ for $n\geqslant 3$, where $T$ is a linear operator defined on  an appropriate
subspace of $\h$.
\begin{defn} We say that a sequence $F=\{f_n\}_{n=1}^\infty$ has  a \textit{Fibonacci representation} if there is a linear operator $T:\spn\{f_n\}_{n=1}^\infty\rightarrow \spn\{f_n\}_{n=1}^\infty$ such that $f_n=T(f_{n-1}+f_{n-2})$ for $n\geqslant 3$. In the affirmative case, we say that $F$ is represented by $T$, and $T$ is called a \textit{Fibonacci representation operator} with respect to $F$.
\end{defn}
Throughout this segment, $\h$ denotes a Hilbert space and $\{e_n\}_{n=1}^\infty$ is an orthonormal basis for $\h$.
\begin{example}
	It is clear that $F=\{f_n\}_{n=1}^\infty=\{e_1, e_1, e_2,...\}$ is a frame for $\h$. We define the linear operator $T:\spn\{e_n\}_{n=1}^\infty \rightarrow \spn\{e_n\}_{n=1}^\infty$ by
\[ Te_1=\dfrac{e_2}{2},\quad Te_n=\sum_{i=0}^{n-2}(-1)^i e_{n-i+1}+(-1)^{n+1}\dfrac{e_2}{2}, \quad n\geqslant  2. \]
Then $F$ is represented by $T$. Indeed, $Te_1+Te_2=e_3$  and 
for $n\geqslant  4$ we have
	\begin{align*}
	Te_{n-2}+Te_{n-1}&=\sum_{i=0}^{n-4}(-1)^i e_{n-i-1}+(-1)^{n-1}\dfrac{e_2}{2}+\sum_{i=0}^{n-3}(-1)^i e_{n-i}+(-1)^{n}\dfrac{e_2}{2}\\
	&=\sum_{i=0}^{n-4}(-1)^i e_{n-i-1}+e_{n}+\sum_{i=1}^{n-3}(-1)^i e_{n-i}\\
	&=\sum_{i=0}^{n-4}(-1)^i e_{n-i-1}+\sum_{i=0}^{n-4}(-1)^{i+1} e_{n-i-1}+e_{n}\\
	&=e_n.
	\end{align*}
	Note that $F$ is not linearly independent, and so by  \cite[Proposition 2.3]{O.M_Acha}, there does not exist a linear operator  $S:\spn\{e_n\}_{n=1}^\infty \rightarrow \spn\{e_n\}_{n=1}^\infty $ such that $Se_1=e_1$ and $Se_{n-1}=e_n,\ n\geqslant  2 $.
\end{example}
\begin{example}\label{exam}
		 The frame $F=\{f_n\}_{n=1}^\infty=\{e_1,e_2,e_3,e_1, e_4, e_5, e_6 ,...\}$ is represented by $T$, where $T:\spn\{f_n\}_{n=1}^\infty \rightarrow \spn\{f_n\}_{n=1}^\infty$ is defined by
		\begin{align*}
		Te_1&=\frac{1}{2}(e_4+e_3-e_1),\quad Te_2=\frac{1}{2}(-e_4+e_3+e_1),\quad Te_3=\frac{1}{2}(e_4-e_3+e_1),\\
		Te_4&=e_5-Te_1,\quad Te_n=e_{n+1}-Te_{n-1},\quad n\geqslant  5.
		\end{align*}
\end{example}
\begin{prop}
	A sequence $F=\{f_n\}_{n=1}^\infty $ is represented by $T$, if and only if $M=\{f_n+f_{n+1}\}_{n=1}^\infty$ and $N=\{f_n-f_{n+1}\}_{n=1}^\infty $ are represented by $T$.
\end{prop}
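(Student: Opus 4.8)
The plan is to run everything through the two linear substitutions $g_k:=f_k+f_{k+1}$ and $h_k:=f_k-f_{k+1}$ together with the identity $2f_k=g_k+h_k$, using nothing but the linearity of $T$; no frame theory enters. Before starting I would fix the interpretation: in all three assertions ``represented by $T$'' refers to one and the same linear operator, and since $\spn\{g_n\}_{n=1}^\infty$ and $\spn\{h_n\}_{n=1}^\infty$ are contained in $\spn\{f_n\}_{n=1}^\infty$ while $f_k=\tfrac12(g_k+h_k)$ forces $\spn\{g_n\}_{n=1}^\infty+\spn\{h_n\}_{n=1}^\infty=\spn\{f_n\}_{n=1}^\infty$, it is harmless to regard $T$ throughout as a linear map on $\spn\{f_n\}_{n=1}^\infty$. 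I would record this compatibility remark first, so that the ``maps its span into itself'' clause of the definition causes no trouble in either direction.

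For the forward implication, suppose $f_n=T(f_{n-1}+f_{n-2})$ for every $n\geqslant 3$ and fix such an $n$; then $n$ and $n+1$ are both $\geqslant 3$. Adding the defining identity at $n$ and at $n+1$ and using linearity,
\[
f_n+f_{n+1}=T\big((f_{n-1}+f_{n-2})+(f_n+f_{n-1})\big)=T\big((f_n+f_{n-1})+(f_{n-1}+f_{n-2})\big),
\]
which is exactly $g_n=T(g_{n-1}+g_{n-2})$, so $M$ is represented by $T$. Subtracting the identity at $n$ from the one at $n+1$ gives
\[
f_{n+1}-f_n=T\big((f_n+f_{n-1})-(f_{n-1}+f_{n-2})\big)=T\big((f_n-f_{n-1})+(f_{n-1}-f_{n-2})\big);
\]
multiplying by $-1$ and reading off $f_k-f_{k+1}=h_k$ turns this into $h_n=T(h_{n-1}+h_{n-2})$, so $N$ is represented by $T$.

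For the converse, suppose $g_n=T(g_{n-1}+g_{n-2})$ and $h_n=T(h_{n-1}+h_{n-2})$ for every $n\geqslant 3$, and fix such an $n$. Adding the two identities and using linearity,
\[
g_n+h_n=T\big((g_{n-1}+h_{n-1})+(g_{n-2}+h_{n-2})\big),
\]
and since $g_k+h_k=2f_k$ this reads $2f_n=2\,T(f_{n-1}+f_{n-2})$; dividing by $2$ gives $f_n=T(f_{n-1}+f_{n-2})$ for all $n\geqslant 3$, i.e.\ $F$ is represented by $T$.

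The arithmetic is routine, so the only point that really needs attention is the bookkeeping flagged in the first paragraph: ensuring that the operator produced in each direction is literally the operator assumed in the other, rather than merely some operator satisfying the recursion on a possibly smaller span. Once the common domain $\spn\{f_n\}_{n=1}^\infty$ is fixed at the outset and the span identities are noted, both implications follow directly from the displayed computations.
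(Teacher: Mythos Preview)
Your proof is correct and follows essentially the same approach as the paper: both directions rely only on linearity of $T$ and the substitutions $g_k=f_k+f_{k+1}$, $h_k=f_k-f_{k+1}$, adding or subtracting the Fibonacci recursion at consecutive indices. Your converse (add the two recursions and divide by $2$) is a bit tidier than the paper's more roundabout computation, and you make explicit the span-compatibility remark that the paper leaves tacit, but the underlying idea is identical.
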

\begin{proof}
	First, let $F$ be represented by $T$. For every $n\in \mathbb{N}$, we have
	\begin{align*}
T\big((f_n+f_{n+1})+(f_{n+1}+f_{n+2})\big)&=T(f_n+f_{n+1})+T(f_{n+1}+f_{n+2})\\
	&=f_{n+2}+f_{n+3},\\
	T\big((f_n-f_{n+1})+(f_{n+1}-f_{n+2})\big)&=T(f_n+f_{n+1})-T(f_{n+1}+f_{n+2})\\
	&=f_{n+2}-f_{n+3},
	\end{align*}
	Then $M$ and $N$ are represented by $T$.
	Conversely, if $M$ and $N$ are represented by $T$, then
	\begin{align*}
	T(f_n&+f_{n+1})=\dfrac{1}{2}T(f_n+f_{n+1}+f_n-f_{n+1}+f_{n+1}+f_{n+2}+f_{n+1}-f_{n+2})\\
	&=\dfrac{1}{2}\Big(T(f_n+f_{n+1}+f_{n+1}+f_{n+2})+T(f_n-f_{n+1}+f_{n+1}-f_{n+2})\Big)\\
	&=\dfrac{1}{2}(f_{n+2}+f_{n+3}+f_{n+2}-f_{n+3})= f_{n+2}, \quad n\in \mathbb{N}.
	\end{align*}
Hence $F$ is represented by $T$.
\end{proof}
A frame may have more than one Fibonacci representation and a frame may not have  any.
\begin{example}\label{exam}
	 The frame $G=\{f_n\}_{n=1}^\infty=\{e_1,e_2,e_1,e_3,e_4,...\}$ does not have any Fibonacci representations. Indeed, if $G$ is represented by $T$, then
		\begin{align*}
		Te_1+Te_2=e_1,\quad Te_2+Te_1=e_3,
		\end{align*}
		which is a contradiction. We note that $\{f_n+f_{n+1}\}_{n=1}^\infty$ is not linearly independent.
\end{example}
\begin{example}\label{exam}
	 Consider the frame $E=\{e_n\}_{n=1}^\infty\subseteq\h$ and let $T,S:\spn\{e_n\}_{n=1}^\infty\to\spn\{e_n\}_{n=1}^\infty$ be linear operators defined by
\begin{align*}
	Te_1&=Te_2=\frac{1}{2}e_3,\quad Te_n=\frac{(-1)^n}{n}e_3-\sum_{i=4}^{n+1}(-1)^{n+i}e_i,\quad n\geqslant 4,\\
Se_1&=0,~ Se_2=e_3,~ Se_3=e_4-e_3,\quad Se_n=e_3-\sum_{i=4}^{n+1}(-1)^{n+i}e_i,\quad n\geqslant 4.
	\end{align*}
 Then it is easy to see that $E$ is represented by $T$ and $S$. We note that $\{e_n+e_{n+1}\}_{n=1}^\infty$ is  linearly independent.
\end{example}
In general if $\{f_n\}_{n=1}^\infty\subseteq\h$ is linearly independent with a Fibonacci representation $T$, then for each $g\in\spn\{f_n\}_{n=1}^\infty$ the linear operator $S:\spn\{f_n\}_{n=1}^\infty\to\spn\{f_n\}_{n=1}^\infty$ defined by
\[S\Big(\sum_{i=1}^k c_i f_i\Big)=\sum_{i=1}^k c_i Tf_i+\sum_{i=1}^k (-1)^ic_i g\]
is a Fibonacci representation for $\{f_n\}_{n=1}^\infty$.
\par
Now, we want to get a sufficient condition for a frame $F=\{f_n\}_{n=1}^\infty$ to have a Fibonacci representation.
We need the following lemma.
\begin{lemma}\label{lemf}
	Consider a sequence $\{f_n\}_{n=1}^\infty$ in $\h$. Then the following hold:
		\begin{enumerate}
\item[$(i)$] For $n\geqslant 2$, we have
		\[ f_n=\sum_{i=0}^{m-1}(-1)^i (f_{n-i-1}+f_{n-i})+(-1)^{m} f_{n-m},\quad 1\leqslant  m \leqslant  n-1.\]
		\item[$(ii)$] $\spn\{f_n\}_{n=1}^\infty=\spn\big\{\{f_1\} \cup\{f_n+f_{n+1}\}_{n=1}^\infty\big\}$.
		\item[$(iii)$] If $\{f_n\}_{n=1}^\infty$ is linearly independent, then $\{f_n+f_{n+1}\}_{n=1}^\infty$ is linearly independent.
		\item[$(iv)$] If $\{f_1\} \cup\{f_n+f_{n+1}\}_{n=1}^\infty$ is linearly independent, then $\{f_n\}_{n=1}^\infty$ is linearly independent.
	\end{enumerate}
\end{lemma}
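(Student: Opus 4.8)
The plan is to prove $(i)$ first, by induction on $m$ (equivalently, by recognising the right-hand side as a telescoping sum), and then to obtain $(ii)$, $(iii)$ and $(iv)$ as formal consequences of it. Fix $n\geqslant 2$ and induct on $m$: the base case $m=1$ is $f_n=(f_{n-1}+f_n)-f_{n-1}$, and for the step, assuming the formula for some $m\leqslant n-2$, replace the last summand $(-1)^m f_{n-m}$ by $(-1)^m\bigl[(f_{n-m-1}+f_{n-m})-f_{n-m-1}\bigr]$ and collect terms to get the formula with $m$ replaced by $m+1$. Alternatively one checks directly that in $\sum_{i=0}^{m-1}(-1)^i(f_{n-i-1}+f_{n-i})$ the vectors $f_{n-1},\dots,f_{n-m+1}$ cancel in consecutive pairs, leaving $f_n+(-1)^{m-1}f_{n-m}$, which rearranges to the claim.

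For $(ii)$, the inclusion $\supseteq$ is immediate since $f_1$ and every $f_n+f_{n+1}$ lie in $\spn\{f_n\}_{n=1}^\infty$. For the reverse inclusion it suffices to show each $f_n$ lies in the right-hand side; this is trivial for $n=1$, and for $n\geqslant 2$ it is exactly what $(i)$ with parameter $m=n-1$ gives, since that identity expresses $f_n$ as a linear combination of $f_1$ and of the vectors $f_{n-i-1}+f_{n-i}$, $0\leqslant i\leqslant n-2$, i.e.\ of $f_1$ and of $\{f_j+f_{j+1}\}_{j=1}^{n-1}$.

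For $(iii)$, suppose $\sum_{k=1}^m c_k(f_k+f_{k+1})=0$. Regrouping by the $f_j$ gives $c_1 f_1+\sum_{j=2}^m(c_{j-1}+c_j)f_j+c_m f_{m+1}=0$, so linear independence of $\{f_n\}_{n=1}^\infty$ forces $c_1=0$, hence $c_2=0$, and inductively $c_k=0$ for all $k$. For $(iv)$, suppose $\sum_{k=1}^m c_k f_k=0$; using $(i)$ (with parameter $k-1$) to rewrite each $f_k$ with $k\geqslant 2$, this relation becomes $\alpha f_1+\sum_{j=1}^{m-1}\beta_j(f_j+f_{j+1})=0$ for suitable scalars, with the coefficient $\beta_{m-1}$ of $f_{m-1}+f_m$ being $c_m$. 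Linear independence of $\{f_1\}\cup\{f_n+f_{n+1}\}_{n=1}^\infty$ gives $\beta_{m-1}=0$, whence $c_m=0$; peeling off the coefficients of $f_{m-2}+f_{m-1},\ f_{m-3}+f_{m-2},\dots$ in turn forces $c_{m-1}=\dots=c_2=0$, and finally the coefficient of $f_1$ forces $c_1=0$. (Equivalently, by $(ii)$ the $n$-element sets $\{f_1,\dots,f_n\}$ and $\{f_1\}\cup\{f_j+f_{j+1}\}_{j=1}^{n-1}$ span the same subspace of $\h$; if the latter is linearly independent it is a basis of that subspace, so the former — a spanning set of the same cardinality — is a basis as well, hence linearly independent, and this holds for every $n$.)

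I expect no genuine obstacle: part $(i)$ is a bookkeeping induction and the rest follows formally. The only points requiring care are keeping the signs and index ranges straight in $(i)$, and, in $(iv)$, verifying that the backward elimination of the $\beta_j$ (equivalently of the $c_k$) indeed terminates with $c_1=0$.
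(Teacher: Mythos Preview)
Your proposal is correct, and for parts $(i)$--$(iii)$ it follows essentially the same line as the paper (the paper verifies $(i)$ by expanding the sum directly and reindexing rather than by induction on $m$, but this is a cosmetic difference; your proofs of $(ii)$ and $(iii)$ match the paper's verbatim in spirit).

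For $(iv)$ there is a genuine difference worth noting. The paper uses the same starting move --- rewriting each $f_k$, $k\geqslant 2$, via $(i)$ --- but then performs a long chain of reindexings to obtain the closed-form coefficients $\sum_{i=2}^{N-n}(-1)^i c_{i+n}$ of each $f_{n+1}+f_{n+2}$, and finally solves that triangular system. Your backward-elimination argument is more efficient: you observe that only the expansion of $f_m$ contributes to the coefficient of $f_{m-1}+f_m$, so $\beta_{m-1}=c_m=0$, after which the relation reduces to $\sum_{k=1}^{m-1}c_kf_k=0$ and one repeats. Your alternative dimension-counting argument --- that $\{f_1,\dots,f_n\}$ and $\{f_1\}\cup\{f_j+f_{j+1}\}_{j=1}^{n-1}$ span the same $n$-dimensional space by $(ii)$ and have the same cardinality, so if one is independent so is the other --- is a genuinely different and cleaner route that bypasses all coefficient computation.
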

\begin{proof}
$(i)$ Let $n\geqslant 2$ and $1\leqslant  m \leqslant  n-1$. Then we have
	\begin{align*}
	\sum_{i=0}^{m-1}(-1)^i (f_{n-i-1}+f_{n-i})
	&=\sum_{i=0}^{m-1}(-1)^i f_{n-i-1}+\sum_{i=0}^{m-1}(-1)^i f_{n-i}\\
	&=\sum_{i=1}^{m}(-1)^{i-1} f_{n-i}+\sum_{i=0}^{m-1}(-1)^i f_{n-i}\\
	&=\sum_{i=1}^{m-1}(-1)^{i-1} f_{n-i}+(-1)^{m-1} f_{n-m}+f_n+\sum_{i=1}^{m-1}(-1)^{i} f_{n-i}\\
	&=(-1)^{m-1} f_{n-m}+f_n.
	\end{align*}
For the proof of $(ii)$, it is clear that $\spn\{\{f_n+f_{n+1}\}_{n=1}^\infty \cup \{f_1\}\} \subseteq \spn\{f_n\}_{n=1}^\infty$.
 On the other hand, by $(i)$ (for $m=n-1$) we infer  $\spn\{f_n\}_{n=1}^\infty\subseteq \spn\{\{f_n+f_{n+1}\}_{n=1}^\infty \cup \{f_1\}\}$. This proves $(ii)$.
To prove $(iii)$, let  $\{c_n\}_{n=1}^k\subseteq \mathbb{C}$  such that $\sum_{n=1}^{k} c_n (f_n+f_{n+1})=0$. Then we have
		\begin{equation*}
		c_1f_1+\sum_{n=2}^{k}(c_{n-1}+c_n)f_n+c_k f_{k+1}=0.
		\end{equation*}
		Since $\{f_n\}_{n=1}^\infty$ is linearly independent, we get $c_1=c_k=0$ and $c_{n-1}+c_{n}=0$ for all $2\leqslant  n\leqslant  k$. Therefore, $c_n=0$ for all  $1\leqslant  n\leqslant  k$. This completes the proof of $(iii)$.
		\par
		To prove $(iv)$, let  $\{c_n\}_{n=1}^N\subseteq \mathbb{C}$  such that $\sum_{n=1}^N c_n f_n=0$. Then by $(i)$, we have
		\begin{align*}
		0=\sum_{n=1}^N c_n f_n
		&= c_1 f_1 +\sum_{n=2}^N c_nf_n\\
		&= c_1 f_1 +\sum_{n=2}^N c_n \Big( \sum_{i=0}^{n-2}(-1)^i (f_{n-i-1}+f_{n-i})+(-1)^{n-1} f_1\Big)\\
		&= \Big(c_1+\sum_{n=2}^N c_n(-1)^{n-1}\Big) f_1+\sum_{n=2}^N c_n \sum_{i=0}^{n-2} (-1)^i (f_{n-i-1}+f_{n-i})\\
		&= \Big(c_1+\sum_{n=2}^N c_n(-1)^{n-1}\Big) f_1+\sum_{i=0}^{N-2} \sum_{n=i+2}^{N} c_n (-1)^i (f_{n-i-1}+f_{n-i})\\
		&= \Big(c_1+\sum_{n=2}^N c_n(-1)^{n-1}\Big) f_1+\sum_{i=2}^{N} \sum_{n=i}^{N} c_n (-1)^{i-2} (f_{n-i+1}+f_{n-i+2})\\
		&= \Big(c_1+\sum_{n=2}^N c_n(-1)^{n-1}\Big) f_1+ \sum_{i=2}^N \sum_{n=0}^{N-i}c_{i+n} (-1)^{i}(f_{n+1}+f_{n+2})\\
		&= \Big(c_1+\sum_{n=2}^N c_n(-1)^{n-1}\Big) f_1+ \sum_{n=0}^{N-2} \Big(\sum_{i=2}^{N-n}c_{i+n} (-1)^{i}\Big)(f_{n+1}+f_{n+2}).
		\end{align*}
Since $\{f_1\} \cup\{f_n+f_{n+1}\}_{n=1}^\infty$ is linearly independent, we get
		\begin{align*}
		c_1+\sum_{k=2}^N c_k(-1)^{k-1}=0,\quad
		\sum_{i=2}^{N-n}c_{i+n} (-1)^{i}=0, \quad 0\leqslant  n\leqslant  N-2.
		\end{align*}
		Hence we conclude that $c_n=0$ for all $ n=1,2,..,N.$ Then $\{f_n\}_{n=1}^\infty$ is linearly independent.
\end{proof}
In the following, we give a sufficient condition for a sequence $F=\{f_n\}_{n=1}^\infty$ to have a Fibonacci representation.
\begin{thm}\label{mt}
	Let $F=\{f_n\}_{n=1}^\infty$ be a sequence in $\h$. If $\{f_n+f_{n+1}\}_{n=1}^\infty $ is linearly independent, then $F$ has a Fibonacci representation.
\end{thm}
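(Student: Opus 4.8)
The plan rests on one elementary fact: a linearly independent family is a Hamel basis of its linear span, so a linear map may be prescribed arbitrarily on its members and then extended by linearity. First I would apply this to $M=\{f_n+f_{n+1}\}_{n=1}^\infty$. Since $M$ is linearly independent, there is a well-defined linear operator $T_0$ on $\spn\{f_n+f_{n+1}\}_{n=1}^\infty$ determined by $T_0(f_n+f_{n+1})=f_{n+2}$ for all $n\geqslant 1$. Each $f_{n+2}$ lies in $\spn\{f_n\}_{n=1}^\infty$, so the range of $T_0$ is contained in $\spn\{f_n\}_{n=1}^\infty$, and for every $n\geqslant 3$ the vector $f_{n-2}+f_{n-1}$ is in the domain of $T_0$ with $T_0(f_{n-2}+f_{n-1})=f_n$. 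Thus $T_0$ is already a Fibonacci representation operator in every respect except that its domain is $\spn\{f_n+f_{n+1}\}_{n=1}^\infty$, which in general is a proper subspace of $\spn\{f_n\}_{n=1}^\infty$.

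The only remaining task is to extend $T_0$ to a linear operator $T$ on all of $\spn\{f_n\}_{n=1}^\infty$ still taking values in $\spn\{f_n\}_{n=1}^\infty$; such a $T$ automatically inherits the relation $T(f_{n-2}+f_{n-1})=f_n$ for $n\geqslant 3$ from $T_0$. Abstractly this is free: $\spn\{f_n+f_{n+1}\}_{n=1}^\infty$ is a subspace of $\spn\{f_n\}_{n=1}^\infty$, so choosing any algebraic complement $W$ and setting $T:=T_0$ on $\spn\{f_n+f_{n+1}\}_{n=1}^\infty$ and $T:=0$ on $W$ works. If one wants something more concrete, Lemma \ref{lemf}(ii) gives $\spn\{f_n\}_{n=1}^\infty=\spn\big(\{f_1\}\cup\{f_n+f_{n+1}\}_{n=1}^\infty\big)$, so only two cases occur: if $f_1\in\spn\{f_n+f_{n+1}\}_{n=1}^\infty$ then $\spn\{f_n\}_{n=1}^\infty=\spn\{f_n+f_{n+1}\}_{n=1}^\infty$ and $T:=T_0$ already does it; otherwise $\{f_1\}\cup\{f_n+f_{n+1}\}_{n=1}^\infty$ is linearly independent (if $f_1$ had a nonzero coefficient in a finite dependence relation it would lie in $\spn\{f_n+f_{n+1}\}_{n=1}^\infty$), hence a Hamel basis of $\spn\{f_n\}_{n=1}^\infty$ by Lemma \ref{lemf}(ii), and one may put $Tf_1:=0$ together with $T(f_n+f_{n+1}):=f_{n+2}$.

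I do not expect a real obstacle: the statement is purely algebraic, with no frame inequality or boundedness entering, and its genuine content is entirely the well-definedness of $T_0$, which is precisely what linear independence of $\{f_n+f_{n+1}\}_{n=1}^\infty$ provides. The one point that needs a little care is keeping the extension valued inside $\spn\{f_n\}_{n=1}^\infty$ rather than in a larger space; sending the algebraic complement to $0$ (equivalently, choosing $Tf_1$ to be any element of $\spn\{f_n\}_{n=1}^\infty$ in the second case above) settles this. As a side remark, the freedom in $Tf_1$ in that case reproduces the non-uniqueness of Fibonacci representations noted earlier, while $T$ is essentially determined on $\spn\{f_n+f_{n+1}\}_{n=1}^\infty$.
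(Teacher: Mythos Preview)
Your argument is correct and shares the paper's overall structure: define $T$ on $\spn\{f_n+f_{n+1}\}_{n=1}^\infty$ by $T(f_n+f_{n+1})=f_{n+2}$, then split into cases according to whether $f_1$ already lies in that span, using Lemma~\ref{lemf}(ii) to see that this is the only missing vector. The paper handles the second case a little differently: rather than taking $\{f_1\}\cup\{f_n+f_{n+1}\}_{n=1}^\infty$ as a Hamel basis of $\spn\{f_n\}_{n=1}^\infty$ and setting $Tf_1:=0$, it invokes Lemma~\ref{lemf}(iv) to conclude that $\{f_n\}_{n=1}^\infty$ itself is linearly independent and then prescribes the explicit formula $Tf_n=\sum_{i=0}^{n}(-1)^i f_{n+1-i}$ on that basis, checking directly that $Tf_n+Tf_{n+1}=f_{n+2}$. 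Your route is slightly more economical, bypassing Lemma~\ref{lemf}(iv) and the telescoping verification; the paper's route has the minor advantage of exhibiting $Tf_n$ explicitly in terms of the original sequence.
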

\begin{proof}
	First we assume that $f_1\in \spn\{f_n+f_{n+1}\}_{n=1}^\infty$. Then by $(ii)$ of  Lemma \ref{lemf}, we have $\spn\{f_n+f_{n+1}\}_{n=1}^\infty=\spn\{f_n\}_{n=1}^\infty.$
	We define a linear operator $T:\spn\{f_n\}_{n=1}^\infty \rightarrow \spn\{f_n\}_{n=1}^\infty$ by
	\begin{equation}
	T(f_n+f_{n+1})=f_{n+2}; \quad n\geqslant  2.
	\end{equation}
	Since $\{f_n+f_{n+1}\}_{n=1}^\infty $ is linearly independent sequence, $T$ is well-defined and $F$ is represented by $T$.
	If $f_1\notin \spn\{f_n+f_{n+1}\}_{n=1}^\infty$, then $\{f_1\}\cup \{f_{n}+f_{n+1}\}_{n=1}^\infty $ is linearly independent and so by Lemma \ref{lemf} $(iv)$, $\{f_n\}_{n=1}^\infty$ is linearly independent. Hence we can define a linear operator $T:\spn\{f_n\}_{n=1}^\infty \rightarrow\spn\{f_n\}_{n=1}^\infty$  by
	\begin{equation*}
	Tf_n= \sum_{i=0}^{n} (-1)^{i} f_{n+1-i},\quad n\in\Bbb{N}.
	\end{equation*}
We show that $F$  is represented by $T$. Indeed,
	\begin{align*}
	Tf_n+Tf_{n+1}&=\sum_{i=0}^{n} (-1)^{i} f_{n+1-i}+\sum_{i=0}^{n+1} (-1)^{i} f_{n+2-i}\\
	&=\sum_{i=0}^{n} (-1)^{i} f_{n+1-i}+\sum_{i=1}^{n+1} (-1)^{i} f_{n+2-i} +f_{n+2}\\
	&=\sum_{i=0}^{n} (-1)^{i} f_{n+1-i}+\sum_{i=0}^{n} (-1)^{i+1} f_{n+1-i} +f_{n+2}=f_{n+2}.
	\end{align*}
\end{proof}
The following example shows that the converse of Theorem \ref{mt} is not satisfied in general.
\begin{example}
	The frame $F=\{f_n\}_{n=1}^\infty=\{e_1,e_2,e_3,e_2,e_2,e_4,e_5,e_6,...\}$  is represented by the linear operator  $T:\spn\{f_n\}_{n=1}^\infty \rightarrow\spn\{f_n\}_{n=1}^\infty$ given by
	\begin{align*}
	Te_1&=e_3-\dfrac{e_4}{2},\quad
	Te_2=\dfrac{e_4}{2},\quad
	Te_3=e_2-\dfrac{e_4}{2},\\
	Te_n&=\sum_{i=0}^{n-4} (-1)^i e_{n-i+1}+(-1)^{n-3}\dfrac{e_4}{2}, \quad n\geqslant  4.
	\end{align*}
	But $\{f_n+f_{n+1}\}_{n=1}^\infty=\{e_1+e_2,e_2+e_3,e_3+e_2,2e_2,...\}$ is not linearly independent.
\end{example}
\begin{cor}\label{lis}
	Let $F=\{f_n\}_{n=1}^\infty$ be a linear independent sequence in $\h$. Then $F$ has a Fibonachi representation.
\end{cor}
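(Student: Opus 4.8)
The plan is to deduce Corollary \ref{lis} directly from Theorem \ref{mt} together with part $(iii)$ of Lemma \ref{lemf}. The corollary asserts that a linearly independent sequence $F=\{f_n\}_{n=1}^\infty$ in $\h$ has a Fibonacci representation, and Theorem \ref{mt} already gives a sufficient condition for this, phrased in terms of the linear independence of the shifted-sum sequence $\{f_n+f_{n+1}\}_{n=1}^\infty$ rather than of $F$ itself. So the only gap to bridge is that linear independence of $F$ forces linear independence of $\{f_n+f_{n+1}\}_{n=1}^\infty$.

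First I would invoke Lemma \ref{lemf}$(iii)$: if $\{f_n\}_{n=1}^\infty$ is linearly independent, then $\{f_n+f_{n+1}\}_{n=1}^\infty$ is linearly independent. (Recall the quick argument there: a finite vanishing combination $\sum_{n=1}^k c_n(f_n+f_{n+1})=0$ rewrites as $c_1 f_1+\sum_{n=2}^k (c_{n-1}+c_n)f_n+c_k f_{k+1}=0$, and linear independence of $F$ forces $c_1=c_k=0$ and $c_{n-1}+c_n=0$ for $2\leqslant n\leqslant k$, hence all $c_n=0$.) Having established that $\{f_n+f_{n+1}\}_{n=1}^\infty$ is linearly independent, I would then apply Theorem \ref{mt} verbatim to conclude that $F$ has a Fibonacci representation. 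That completes the proof.

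There is essentially no obstacle here; the corollary is an immediate chaining of two already-proved results, and the proof is a single sentence. If one wants to be slightly more self-contained, one could note which branch of the proof of Theorem \ref{mt} actually applies: since $\{f_n\}_{n=1}^\infty$ is linearly independent, the vector $f_1$ cannot lie in $\spn\{f_n+f_{n+1}\}_{n=1}^\infty$ unless that span already equals $\spn\{f_n\}_{n=1}^\infty$ — in fact by Lemma \ref{lemf}$(ii)$ we always have $\spn\{f_n\}_{n=1}^\infty=\spn(\{f_1\}\cup\{f_n+f_{n+1}\}_{n=1}^\infty)$, so one checks whether $f_1$ is redundant in this spanning set — but for the purposes of the corollary it is cleanest simply to cite Lemma \ref{lemf}$(iii)$ and Theorem \ref{mt} and let the theorem's internal case analysis handle both possibilities. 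The explicit operator produced in the linearly independent case is $Tf_n=\sum_{i=0}^{n}(-1)^i f_{n+1-i}$, which one could mention for concreteness, but no new verification is required beyond what Theorem \ref{mt} already supplies.
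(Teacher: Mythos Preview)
Your proof is correct and follows exactly the same approach as the paper: the paper's own proof is the single sentence ``It follows from Lemma \ref{lemf} $(iii)$ and Theorem \ref{mt},'' which is precisely the chaining you describe. Your additional remarks about the case analysis inside Theorem \ref{mt} and the explicit formula for $T$ are accurate but unnecessary for the corollary.
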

\begin{proof}
	It follows from Lemma \ref{lemf} $(iii)$ and Theorem \ref{mt}.
\end{proof}
Now, we provide  sufficient conditions to make the converse of Theorem \ref{mt}  satisfy.
\begin{thm}\label{mt2}
	Let $F=\{f_n\}_{n=1}^\infty$ be a complete sequence in an infinite dimensional Hilbert space $\h$ which is represented by $T$. If there exists $m\in \mathbb{N}$ such that $f_{m+1},Tf_1 \in \spn\{f_n\}_{n=1}^{m}$, then $\{f_n+f_{n+1}\}_{n=1}^\infty $ is linearly independent.
\end{thm}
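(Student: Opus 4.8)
The plan is to show that the two side conditions $f_{m+1}\in\spn\{f_n\}_{n=1}^m$ and $Tf_1\in\spn\{f_n\}_{n=1}^m$ cannot hold simultaneously for a complete sequence in an infinite-dimensional space, so that the implication is established vacuously. Set $W:=\spn\{f_n\}_{n=1}^m$, a subspace of dimension at most $m$. The first step is to prove that $T$ leaves $W$ invariant. Since $F$ is represented by $T$, linearity gives $Tf_{j-1}+Tf_{j-2}=f_j$ for every $j\geqslant 3$, hence $Tf_j=f_{j+1}-Tf_{j-1}$ for $j\geqslant 2$. I would then induct on $j$: the base case $Tf_1\in W$ is one of the hypotheses, and for $2\leqslant j\leqslant m$ the vector $f_{j+1}$ lies in $W$ (trivially if $j<m$, by the hypothesis $f_{m+1}\in W$ if $j=m$), while $Tf_{j-1}\in W$ by the inductive hypothesis, so $Tf_j\in W$. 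Therefore $T(W)=\spn\{Tf_1,\dots,Tf_m\}\subseteq W$.

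The second step is another induction, now proving $f_n\in W$ for every $n\in\mathbb{N}$. The cases $n\leqslant m$ are immediate and $n=m+1$ is a hypothesis; assuming $f_1,\dots,f_n\in W$ with $n\geqslant m+1$, one has $f_{n+1}=T(f_{n-1}+f_n)\in T(W)\subseteq W$. Hence $\spn\{f_n\}_{n=1}^\infty=W$. But $W$ is finite-dimensional, therefore closed, so completeness of $F$ forces $\h=\overline{\spn\{f_n\}_{n=1}^\infty}=W$, contradicting $\dim\h=\infty$. Consequently there is no complete sequence in an infinite-dimensional $\h$ that is represented by some $T$ and satisfies $f_{m+1},Tf_1\in\spn\{f_n\}_{n=1}^m$, and the theorem follows.

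The computations are entirely elementary; the only point needing care is the index bookkeeping in the first induction, so that no $f_i$ with $i>m+1$ is ever invoked and the hypothesis $f_{m+1}\in W$ is used precisely where it is needed (at $j=m$, and again to launch the second induction). The conceptual step — and the thing one might overlook — is recognizing that these side conditions are incompatible with completeness in infinite dimensions, which is exactly what makes the conclusion automatic; any attempt at a more ``direct'' proof of linear independence of $\{f_n+f_{n+1}\}_{n=1}^\infty$ would in the end reduce to this same dimension count.
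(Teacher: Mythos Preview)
Your proof is correct, and it is in fact sharper than the paper's. The paper argues by contradiction: assuming $\{f_n+f_{n+1}\}_{n=1}^\infty$ is linearly dependent, it locates an index $n_0$ with $f_{n_0+2}\in\spn\{f_n\}_{n=1}^{n_0+1}$, sets $V=\spn\{f_n\}_{n=1}^{l}$ with $l=\max\{n_0+1,m\}$, and then (via Lemma~\ref{lemf}\,(i) rather than the direct recursion $Tf_j=f_{j+1}-Tf_{j-1}$ you use) shows that $V$ is $T$-invariant, so that $\spn\{f_n\}_{n=1}^\infty=V$ and the usual dimension contradiction follows. Your argument runs the same invariance-plus-dimension count directly on $W=\spn\{f_n\}_{n=1}^{m}$, never invoking the negation of the conclusion. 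This is more than a cosmetic simplification: it shows that the side conditions $f_{m+1},Tf_1\in\spn\{f_n\}_{n=1}^{m}$ are already incompatible with completeness of $F$ in an infinite-dimensional space, so the implication in the theorem holds vacuously. The paper's detour through $n_0$ is therefore unnecessary; what your approach buys is the stronger observation that the hypothesis is empty, at no additional cost, while the paper's proof obscures this by introducing an assumption it does not actually need.
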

\begin{proof}
	Suppose that $\{f_n+f_{n+1}\}_{n=1}^\infty $ is not linearly independent. Then there exists $n_0\in \mathbb{N}$ such that $f_{n_0} +f_{n_0+1}=\sum_{n=1}^{n_0-1} c_n(f_n+f_{n+1}).$ Hence
	\begin{equation}\label{14}
	f_{n_0+2}= T(f_{n_0} +f_{n_0+1})=\sum_{n=1}^{n_0-1} c_nf_{n+2} \in \spn\{f_n\}_{n=1}^{n_0+1}.\\
	\end{equation}
	Let $V=\spn\{f_n\}_{n=1}^l$, where $l=\max\{n_0+1 , m\}$.
	By \eqref{14} and $f_{m+1} \in \spn\{f_n\}_{n=1}^{m}$, we get $f_{l+1} \in V$. We show $V$ is invariant under $T$. Suppose that $f=\sum_{n=1}^l c_nf_n\in V$. By using $(i)$ of Lemma \ref{lemf}, we have
	\begin{align*}
	Tf=T\Big(\sum_{n=1}^l c_nf_n\Big)
	&= c_1 Tf_1 + \sum_{n=2}^l c_n Tf_n\\
	&=c_1Tf_1 + \sum_{n=2}^l c_n T\Big(\sum_{i=0}^{n}(-1)^i (f_{n-i-1}+f_{n-i})+(-1)^{n-1} f_1 \Big)\\
	&=c_1Tf_1 + \sum_{n=2}^l c_n \Big(\sum_{i=0}^{n}(-1)^i T(f_{n-i-1}+f_{n-i})+(-1)^{n-1} Tf_1\Big) \\
	&=\Big(c_1+\sum_{n=2}^l c_n(-1)^{n-1}\Big)Tf_1 + \sum_{n=2}^l c_n\sum_{i=0}^{n}(-1)^i f_{n-i+1}.
	\end{align*}
	Since $Tf_1 \in \spn\{f_n\}_{n=1}^{m}\subseteq V$ and $f_{l+1}\in V$, the above argument proves that $V$ is invariant under $T$. Therefore    $f_n \in V $ for all $n\geqslant  l+1$ and consequently $\spn\{f_n\}_{n=1}^\infty=V$. Since $\{f_n\}_{n=1}^\infty$ is complete in $\h$, we have $\h =\overline{\spn}\{f_n\}_{n=1}^\infty=\overline{V}=V$ which is in contradiction to $\dim \h=\infty$.
\end{proof}
\begin{prop}\label{remcom}
	 Let $\{f_n\}_{n=1}^{\infty}$ be  a complete and linearly dependent sequence with $f_1\neq0$  in an infinite dimensional Hilbert space $\h$. Then there exists $m\geqslant2$ such that $f_m\in \spn\{f_n\}_{n=1}^{m-1}$ and  $f_{m+1}\notin \spn\{f_n\}_{n=1}^{m}$.
\end{prop}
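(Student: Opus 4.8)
The plan is to follow the growth of the nested finite-dimensional subspaces $V_k:=\spn\{f_1,\dots,f_k\}$, with the convention $V_0:=\{0\}$. Since $V_k=V_{k-1}+\spn\{f_k\}$, the increments $d_k:=\dim V_k-\dim V_{k-1}$ all lie in $\{0,1\}$, and $d_k=0$ holds exactly when $f_k\in V_{k-1}=\spn\{f_n\}_{n=1}^{k-1}$. In this language the assertion becomes: there is an index $m\geqslant 2$ with $d_m=0$ and $d_{m+1}=1$.

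First I would extract two facts from the hypotheses. Because $\{f_n\}_{n=1}^\infty$ is complete, $\bigcup_k V_k=\spn\{f_n\}_{n=1}^\infty$ is dense in $\h$; were the nondecreasing integer sequence $\dim V_k$ bounded, the union $\bigcup_k V_k$ would be a finite-dimensional — hence closed — subspace, forcing $\h=\overline{\bigcup_k V_k}=\bigcup_k V_k$ to be finite-dimensional, contrary to hypothesis. Hence $\dim V_k\to\infty$. Next, since $\{f_n\}_{n=1}^\infty$ is linearly dependent, taking a nontrivial relation $\sum_{k=1}^N c_kf_k=0$ and letting $j$ be the largest index with $c_j\neq0$ gives $f_j\in V_{j-1}$, i.e. $d_j=0$ for some $j$; and $f_1\neq0$ gives $d_1=1$, so the least index with $d_k=0$ is some $m_0\geqslant 2$.

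Then I would argue by contradiction. If no $m\geqslant 2$ has $d_m=0$ and $d_{m+1}=1$, then for every $j\geqslant 2$ one has the implication $d_j=0\Rightarrow d_{j+1}=0$ (using $d_{j+1}\in\{0,1\}$). Applying this from $j=m_0$ and inducting yields $d_j=0$ for all $j\geqslant m_0$, so $\dim V_j=\dim V_{m_0-1}$ for all $j\geqslant m_0-1$; this contradicts $\dim V_k\to\infty$. Therefore such an $m$ exists, and for it $f_m\in\spn\{f_n\}_{n=1}^{m-1}$ while $f_{m+1}\notin\spn\{f_n\}_{n=1}^{m}$, which is what we wanted.

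The whole argument is elementary bookkeeping with dimensions, so I do not expect a real obstacle; the only point needing a little care is justifying $\dim V_k\to\infty$, which rests on completeness of $\{f_n\}_{n=1}^\infty$ together with $\dim\h=\infty$ and the fact that finite-dimensional subspaces of a Hilbert space are closed. The rest is a standard ``smallest bad index, then propagate'' contradiction.
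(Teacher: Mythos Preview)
Your proof is correct and follows essentially the same approach as the paper's: both exploit linear dependence to locate an index where the span does not grow, then use completeness together with $\dim\h=\infty$ to force a later index where the span must grow, and extract the desired transition. Your dimension-increment bookkeeping via $d_k$ is a clean repackaging, but the underlying idea is identical to the paper's direct argument.
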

\begin{proof}
  Since $\{f_n\}_{n=1}^{\infty}$ is  linearly dependent, there exists $k\geqslant2$ such that $f_k\in\spn\{f_n\}_{n=1}^{k-1}$. We claim that there exists an integer $l>k$ such that $f_l\notin\spn\{f_n\}_{n=1}^{l-1}$. If $f_l\in\spn\{f_n\}_{n=1}^{l-1}$ for each $l>k$, then $f_{k+1}\in\spn\{f_n\}_{n=1}^{k-1}$ because $f_k\in\spn\{f_n\}_{n=1}^{k-1}$ and $f_{k+1}\in\spn\{f_n\}_{n=1}^{k}$. Hence by induction we get $f_{l}\in\spn\{f_n\}_{n=1}^{k-1}$ for each $l>k$. Therefore $\spn\{f_n\}_{n=1}^{\infty}=\spn\{f_n\}_{n=1}^{k-1}$. Since $\{f_n\}_{n=1}^{\infty}$ is complete and $\dim \h=\infty$, the contradiction is achieved.
  Now, let $i\in\Bbb{N}$ be the smallest number such that $f_{k+i}\notin\spn\{f_n\}_{n=1}^{k+i-1}$. Putting $m=k+i-1$, we get $f_m\in \spn\{f_n\}_{n=1}^{m-1}$ and  $f_{m+1}\notin \spn\{f_n\}_{n=1}^{m}$.
\end{proof}
\begin{prop}\label{in}
	Let $F=\{f_n\}_{n=1}^\infty $ be a sequence in $\h$ which is represented by $T$. Suppose that $f_m\in \spn\{f_n\}_{n=1}^{m-1}$ and $f_{m+1} \notin \spn\{f_n\}_{n=1}^{m-1}$ for some integer $m\geqslant 2$. Then $Tf_i\in \spn\{f_n\}_{n=3}^{m+1}$ for $1\leqslant  i \leqslant  m$.
\end{prop}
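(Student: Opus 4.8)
The plan is to reduce the whole statement to locating the single vector $Tf_1$. First I would apply the (genuine linear) operator $T$ to the telescoping identity of Lemma \ref{lemf} $(i)$, taken in the notation there with the free parameter equal to $n-1$, namely
\[
f_n=\sum_{i=0}^{n-2}(-1)^i(f_{n-i-1}+f_{n-i})+(-1)^{n-1}f_1\qquad(n\geqslant 2).
\]
Since every pair $f_{n-i-1}+f_{n-i}$ appearing here has first index $n-i-1\geqslant 1$, the Fibonacci representation property $T(f_j+f_{j+1})=f_{j+2}$ yields
\[
Tf_n=\sum_{i=0}^{n-2}(-1)^i f_{n-i+1}+(-1)^{n-1}Tf_1\qquad(n\geqslant 2).
\]
The explicit sum on the right lies in $\spn\{f_3,\dots,f_{n+1}\}$; hence for $2\leqslant n\leqslant m$ it lies in $\spn\{f_k\}_{k=3}^{m+1}$, and the proposition reduces to showing $Tf_1\in\spn\{f_k\}_{k=3}^{m+1}$.

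To pin down $Tf_1$ I would use the relation at index $m$. Since $f_m\in\spn\{f_k\}_{k=1}^{m-1}$ we have $\spn\{f_k\}_{k=1}^{m}=\spn\{f_k\}_{k=1}^{m-1}$, so I can write $f_{m-1}+f_m=\sum_{n=1}^{m-1}d_nf_n$. Applying $T$ and substituting the formula above for the terms with $n\geqslant 2$ gives
\[
f_{m+1}=T(f_{m-1}+f_m)=\nu\,Tf_1+w,
\]
where $\nu=\sum_{n=1}^{m-1}(-1)^{n-1}d_n$ is a scalar and $w=\sum_{n=2}^{m-1}d_n\sum_{i=0}^{n-2}(-1)^i f_{n-i+1}$; a check of the index ranges shows $w\in\spn\{f_k\}_{k=3}^{m}$ (the indices $n-i+1$ run between $3$ and $n+1\leqslant m$, and when $m=2$ the outer sum is empty, so $w=0$ and $\nu=d_1$). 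Now comes the one real point: $w\in\spn\{f_k\}_{k=3}^{m}\subseteq\spn\{f_k\}_{k=1}^{m}=\spn\{f_k\}_{k=1}^{m-1}$, whereas $f_{m+1}\notin\spn\{f_k\}_{k=1}^{m-1}$ by hypothesis; so the identity $f_{m+1}=\nu\,Tf_1+w$ forces $\nu\neq0$. Hence $Tf_1=\nu^{-1}(f_{m+1}-w)\in\spn\{f_k\}_{k=3}^{m+1}$, and combining this with the second display gives $Tf_i\in\spn\{f_k\}_{k=3}^{m+1}$ for every $1\leqslant i\leqslant m$.

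The only delicate part is the index bookkeeping in the two displays — checking that applying $T$ to each telescoping combination lands precisely in the claimed span and reading off the scalar $\nu$ correctly — but this is routine once the formula for $Tf_n$ is established. The genuine content is the short observation that $f_{m+1}\notin\spn\{f_k\}_{k=1}^{m-1}$ forces the coefficient of $Tf_1$ to be nonzero, which is what makes $Tf_1$ solvable in terms of $f_3,\dots,f_{m+1}$; so I do not anticipate a serious obstacle.
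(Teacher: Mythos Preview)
Your argument is correct and follows essentially the same route as the paper: both combine the telescoping identity of Lemma~\ref{lemf}\,(i) with the Fibonacci relation $T(f_j+f_{j+1})=f_{j+2}$, and both hinge on the observation that $f_{m+1}\notin\spn\{f_k\}_{k=1}^{m-1}$ forces a certain coefficient to be nonzero so that one of the $Tf_i$ can be solved for in $\spn\{f_k\}_{k=3}^{m+1}$. The only organizational difference is that the paper first isolates $Tf_{m-1}$ (via an explicit computation rewriting $f_{m-1}+f_m$ as a combination of consecutive sums $f_n+f_{n+1}$ plus a multiple of $f_{m-1}$) and then recovers the remaining $Tf_i$ from it, whereas you derive the formula $Tf_n=\sum_{i=0}^{n-2}(-1)^i f_{n-i+1}+(-1)^{n-1}Tf_1$ upfront and reduce everything to locating $Tf_1$; your ordering is a bit more economical but not materially different.
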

\begin{proof}
	By the assumption, we have $f_m=\sum_{n=1}^{m-1}c_n f_n$, so
	\begin{align*}
	\sum_{n=1}^{m-2}&\Big(\sum_{i=0}^{n-1}(-1)^i c_{n-i}\Big)(f_n+f_{n+1})\\
	&=\sum_{n=1}^{m-2}\Big(\sum_{i=0}^{n-1}(-1)^i c_{n-i}\Big)f_n + \sum_{n=1}^{m-2}\Big(\sum_{i=0}^{n-1}(-1)^i c_{n-i}\Big)f_{n+1}\\
	&=\sum_{n=1}^{m-2}\Big(\sum_{i=0}^{n-1}(-1)^i c_{n-i}\Big)f_n + \sum_{n=2}^{m-1}\Big(\sum_{i=0}^{n-2}(-1)^i c_{n-i-1}\Big)f_{n}\\
	&=c_1f_1+\sum_{n=2}^{m-2}\Big(\sum_{i=0}^{n-1}(-1)^i c_{n-i}\Big)f_n\\
	&\quad +\sum_{n=2}^{m-2}\Big(\sum_{i=0}^{n-2}(-1)^i c_{n-i-1}\Big)f_{n}+\Big(\sum_{i=0}^{m-3}(-1)^i c_{m-i-2}\Big)f_{m-1}\\
	&=c_1f_1+\sum_{n=2}^{m-2} \Big(\sum_{i=0}^{n-1}(-1)^i c_{n-i} + \sum_{i=0}^{n-2}(-1)^i c_{n-i-1}\Big) f_{n}\\
	&\quad +\Big(\sum_{i=0}^{m-3}(-1)^i c_{m-i-2}\Big)f_{m-1}\\
	&=c_1f_1+\sum_{n=2}^{m-2} \Big(c_n+\sum_{i=1}^{n-1}(-1)^i c_{n-i}+\sum_{i=1}^{n-1}(-1)^{i-1} c_{n-i}\Big)f_{n}\\
	&\quad+\Big(\sum_{i=0}^{m-3}(-1)^i c_{m-i-2}\Big)f_{m-1}\\
	&=c_1f_1+\sum_{n=2}^{m-2} c_nf_{n}+\Big(\sum_{i=0}^{m-3}(-1)^i c_{m-i-2}\Big)f_{m-1}\\
	&=f_{m}+\Big(-c_{m-1}+\sum_{i=0}^{m-3}(-1)^i c_{m-i-2}\Big)f_{m-1}\\
	&=f_{m}+\Big(\sum_{i=0}^{m-2}(-1)^{i-1} c_{m-i-1}\Big)f_{m-1},
	\end{align*}
	thus
	\begin{align}\label{554}
	f_{m-1}+f_{m}=\sum_{n=1}^{m-2}\Big(\sum_{i=0}^{n-1}(-1)^i c_{n-i}\Big)(f_n+f_{n+1})
	+\Big(1-\sum_{i=0}^{m-2}(-1)^{i-1} c_{m-i-1}\Big)f_{m-1}.
	\end{align}
	Since $F$ is represented by $T$, the equality \eqref{554} implies that
	\begin{equation}\label{12}
	f_{m+1}=\sum_{n=1}^{m-2}\Big(\sum_{i=0}^{n-1}(-1)^i c_{n-i}\Big)f_{n+2}+\Big(1-\sum_{i=0}^{m-2}(-1)^{i-1} c_{m-i-1}\Big)Tf_{m-1}.
	\end{equation}
	If $1-\sum_{i=0}^{m-2}(-1)^{i-1} c_{m-i-1}=0$, then $f_{m+1}\in \spn\{f_n\}_{n=3}^{m}\subseteq\spn\{f_n\}_{n=1}^{m-1}$ which is a contradiction.
	Hence \eqref{12} implies that
	\begin{equation}\label{13}
	Tf_{m-1}=\dfrac{f_{m+1}-\sum_{n=1}^{m-2}\Big(\sum_{i=0}^{n-1}(-1)^i c_{n-i}\Big)f_{n+2}}{1-\sum_{i=0}^{m-2}(-1)^{i-1} c_{m-i-1}}\in\spn\{f_n\}_{n=3}^{m+1}.
	\end{equation}
	Also, by $(i)$ of Lemma \ref{lemf}, for $1\leqslant  j \leqslant  m-1$, we have
	\begin{align*}\label{15}
	f_{m+1}=Tf_{m}+ Tf_{m-1}&=T\Big(\sum_{i=0}^{j-1}(-1)^i (f_{m-i-1}+f_{m-i})+(-1)^{j} f_{m-j}\Big)+Tf_{m-1}\\
	&=\sum_{i=0}^{j-1}(-1)^i f_{m-i+1}+(-1)^{j} Tf_{m-j}+Tf_{m-1}.
	\end{align*}
	Therefore
	\begin{equation}\label{116}
	Tf_{m-j}=(-1)^{j} \big(f_{m+1}-Tf_{m-1}-\sum_{i=0}^{j-1}(-1)^i f_{m-i+1}\big).
	\end{equation}
	Hence it follows from  \eqref{13} and \eqref{116} that   $Tf_i \in \spn\{f_n\}_{n=3}^{m+1}$ for each $ 1\leqslant  i \leqslant  m-1$.
\end{proof}
\begin{cor}\label{in2}
	Let $F=\{f_n\}_{n=1}^\infty $ be a sequence in $\h$ which is represented by $T$. Suppose that $f_m\in \spn\{f_n\}_{n=1}^{m-1}$ and $f_{m+1} \notin \spn\{f_n\}_{n=1}^{m-1}$ for some $m\in \mathbb{N}$. Then, $Tf_{m+i}\in \spn\{f_n\}_{n=3}^{m+i+1}$ for each $i \in \mathbb{N}$.
\end{cor}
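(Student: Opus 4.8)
The plan is to argue by induction on $i$, using Proposition \ref{in} as the base case and the Fibonacci recursion — together with the linearity of $T$ — to advance the index. The one algebraic fact driving everything is that, since $F$ is represented by $T$, for every $n\geqslant 1$ we have $f_{n+2}=T(f_n+f_{n+1})=Tf_n+Tf_{n+1}$, and therefore $Tf_{n+1}=f_{n+2}-Tf_n$. Since $\spn\{f_n\}_{n=3}^{N}\subseteq\spn\{f_n\}_{n=3}^{N+1}$ and $f_{N}\in\spn\{f_n\}_{n=3}^{N}$ whenever $N\geqslant 3$, this identity lets me transport a membership statement one step at a time.

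For the base case $i=1$, I would first invoke Proposition \ref{in} at the index $i=m$ to get $Tf_m\in\spn\{f_n\}_{n=3}^{m+1}$. Then the recursion with $n=m$ gives $Tf_{m+1}=f_{m+2}-Tf_m$; since $f_{m+2}\in\spn\{f_n\}_{n=3}^{m+2}$ and $Tf_m\in\spn\{f_n\}_{n=3}^{m+1}\subseteq\spn\{f_n\}_{n=3}^{m+2}$, we conclude $Tf_{m+1}\in\spn\{f_n\}_{n=3}^{m+2}$, which is exactly the claim for $i=1$. For the inductive step, assume $Tf_{m+i-1}\in\spn\{f_n\}_{n=3}^{m+i}$ for some $i\geqslant 1$; applying the recursion with $n=m+i-1$ yields $Tf_{m+i}=f_{m+i+1}-Tf_{m+i-1}$, and since $f_{m+i+1}\in\spn\{f_n\}_{n=3}^{m+i+1}$ and $Tf_{m+i-1}\in\spn\{f_n\}_{n=3}^{m+i}\subseteq\spn\{f_n\}_{n=3}^{m+i+1}$, we obtain $Tf_{m+i}\in\spn\{f_n\}_{n=3}^{m+i+1}$. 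This closes the induction and proves the corollary.

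I do not expect any real obstacle: the corollary is a short one-step induction whose only ingredients are the linearity of $T$, the defining recursion $f_{n+2}=Tf_n+Tf_{n+1}$, and Proposition \ref{in}. The only point requiring a little care is the bookkeeping at the base case — making sure Proposition \ref{in} is being used at $i=m$ (so that $Tf_m$, and not merely $Tf_1,\dots,Tf_{m-1}$, is controlled), after which every later index is handled uniformly by the recursion.
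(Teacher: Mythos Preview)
Your argument is correct and follows the same approach as the paper, whose proof is the single line: since $Tf_{m+i}=f_{m+i+1}-Tf_{m+i-1}$, the result follows by induction on $i$ and Proposition~\ref{in}. Your write-up simply unfolds this induction explicitly, with the appeal to Proposition~\ref{in} supplying the starting value $Tf_m\in\spn\{f_n\}_{n=3}^{m+1}$.
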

\begin{proof}
	Since $Tf_{m+i}=f_{m+i+1}-Tf_{m+i-1}$, the result follows by induction on $i$ and Proposition \ref{in}.
\end{proof}
\begin{cor}\label{ranspn}
	Let $F=\{f_n\}_{n=1}^\infty $ be a complete sequence in an infinite dimensional Hilbert space $\h$.
	\begin{enumerate}
		\item [$(i)$] If $F$ is linearly independent, then it has a Fibonacci representation $T$ such that $\ran(T)=\spn\{f_n\}_{n=3}^\infty.$
		\item [$(ii)$] If $F$ is linearly dependent, then for any Fibonacci representation $T$ of $F$ we have $\ran(T)=\spn\{f_n\}_{n=3}^\infty.$
	\end{enumerate}
\end{cor}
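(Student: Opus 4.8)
The plan is to notice first that one inclusion is free of charge. For \emph{any} Fibonacci representation $T$ of $F$, the defining relation $f_n=T(f_{n-1}+f_{n-2})$, $n\geqslant3$, forces $f_n\in\ran(T)$ for every $n\geqslant3$; since $\ran(T)$ is a subspace, this gives $\spn\{f_n\}_{n=3}^\infty\subseteq\ran(T)$ in both parts. So the real content in each case is the reverse inclusion $\ran(T)\subseteq\spn\{f_n\}_{n=3}^\infty$, i.e.\ that $Tf_n\in\spn\{f_k\}_{k=3}^\infty$ for every $n\in\mathbb{N}$.

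For $(i)$ I would exhibit a suitable $T$ explicitly. Since $F$ is linearly independent, $\{f_n\}_{n=1}^\infty$ is a Hamel basis of $\spn\{f_n\}_{n=1}^\infty$, so I may define a linear map $T:\spn\{f_n\}_{n=1}^\infty\to\spn\{f_n\}_{n=1}^\infty$ on this basis by
\[
Tf_1=0,\qquad Tf_n=\sum_{i=0}^{n-2}(-1)^i f_{n+1-i}\quad(n\geqslant2).
\]
A telescoping computation analogous to the one in the proof of Theorem \ref{mt} shows $Tf_n+Tf_{n+1}=f_{n+2}$ for all $n\geqslant1$, so $T$ is a Fibonacci representation of $F$ (it is one of the operators obtained from the construction recorded just before Theorem \ref{mt}). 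By construction each $Tf_n$ is a finite combination of $\{f_k\}_{k\geqslant3}$, hence $\ran(T)\subseteq\spn\{f_k\}_{k=3}^\infty$, and together with the free inclusion this yields $\ran(T)=\spn\{f_k\}_{k=3}^\infty$. Note that completeness and $\dim\h=\infty$ are not used in $(i)$.

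For $(ii)$ the argument is an assembly of the earlier results. Fix an arbitrary Fibonacci representation $T$ of $F$. Since $F$ is complete and linearly dependent with $f_1\neq0$ in the infinite-dimensional space $\h$, Proposition \ref{remcom} provides an integer $m\geqslant2$ with $f_m\in\spn\{f_n\}_{n=1}^{m-1}$ and $f_{m+1}\notin\spn\{f_n\}_{n=1}^{m}$; in particular $f_{m+1}\notin\spn\{f_n\}_{n=1}^{m-1}$, which is exactly what is needed to apply Proposition \ref{in} and Corollary \ref{in2}. Proposition \ref{in} gives $Tf_i\in\spn\{f_n\}_{n=3}^{m+1}$ for $1\leqslant i\leqslant m$, and Corollary \ref{in2} gives $Tf_{m+i}\in\spn\{f_n\}_{n=3}^{m+i+1}$ for every $i\geqslant1$. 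Hence $Tf_n\in\spn\{f_k\}_{k=3}^\infty$ for all $n$, so $\ran(T)\subseteq\spn\{f_k\}_{k=3}^\infty$, and combined with the free inclusion this gives $\ran(T)=\spn\{f_k\}_{k=3}^\infty$.

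There is no serious obstacle here: the proof is essentially bookkeeping on top of Theorem \ref{mt} and Propositions \ref{remcom}--\ref{in}. The points that need a little care are (a) checking the Fibonacci relation for the operator in $(i)$ at the initial index $n=3$, where the choice $Tf_1=0$ enters, and (b) observing that the $m$ delivered by Proposition \ref{remcom} satisfies the slightly weaker non-membership hypothesis $f_{m+1}\notin\spn\{f_n\}_{n=1}^{m-1}$ demanded by Proposition \ref{in}, together with the (implicit) hypothesis $f_1\neq0$ in $(ii)$, without which Proposition \ref{remcom} does not apply.
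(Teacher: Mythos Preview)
Your proof is correct and follows essentially the same route as the paper's: the free inclusion $\spn\{f_n\}_{n=3}^\infty\subseteq\ran(T)$, an explicit construction for $(i)$, and Propositions~\ref{remcom}, \ref{in} together with Corollary~\ref{in2} for $(ii)$. The only cosmetic difference is that the paper's explicit operator in $(i)$ takes $Tf_1=Tf_2=\tfrac12 f_3$ (and correspondingly adjusts $Tf_n$ for $n\geqslant3$) rather than your choice $Tf_1=0$, $Tf_2=f_3$; both variants belong to the family described just before Theorem~\ref{mt}, and your remark that $f_1\neq0$ is an implicit hypothesis needed to invoke Proposition~\ref{remcom} applies equally to the paper's own argument.
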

\begin{proof}
	First we note that if $F$ is represented by $T$, then $f_n=T(f_{n-1}+f_{n-2})\in \ran (T)$ for every $n\geqslant  3$, and consequently   $\spn\{f_n\}_{n=3}^\infty\subseteq \ran( T)$.
\par
	To prove $(i)$, consider the linear operator $T:\spn\{f_n\}_{n=1}^\infty \rightarrow \spn\{f_n\}_{n=1}^\infty$ defined by
	\begin{align*}
	Tf_1=Tf_2=\frac{1}{2}f_3,\quad
	Tf_n= \sum_{i=0}^{n-3} (-1)^{i} f_{n+1-i}+ \frac{(-1)^n}{2}f_3, \quad n\geqslant  3.
	\end{align*}
	Then
	\begin{align*}
	&Tf_1+Tf_2=f_3,\quad Tf_2+Tf_3=f_4,\\ &Tf_{n}+Tf_{n+1}=\sum_{i=0}^{n-3}(-1)^i f_{n-i+1}+\sum_{i=0}^{n-2}(-1)^i f_{n-i+2}=f_{n+2}, \quad n\geqslant  3.
	\end{align*}
	Hence $F$ is represented by $T$ and it is obvious that $\ran (T)\subseteq \spn\{f_n\}_{n=3}^\infty$.
	In order to prove $(ii)$, by Proposition \ref{remcom} there exists $m\geqslant 2$ such that $f_m\in \spn\{f_n\}_{n=1}^{m-1}$ and  $f_{m+1}\notin \spn\{f_n\}_{n=1}^{m-1}$. If $F$ is represented by $T$, then  by Proposition \ref{in} and Corollary \ref{in2}  we have $\ran(T)\subseteq \spn\{f_n\}_{n=3}^\infty.$
\end{proof}
In Theorem \ref{mt2}, we showed that $\{f_{n}+f_{n+1}\}_{n=1}^\infty  $ is linearly independent under some conditions. In the following, we show that (under some conditions) by removing finitely many elements of $\{f_{n}+f_{n+1}\}_{n=1}^\infty  $ the remaining elements will be linearly independent.
\begin{thm}\label{mt3}
	Let $F=\{f_n\}_{n=1}^\infty$ be a complete sequence in an infinite dimensional Hilbert space $\h$ which is represented by $T$. Then there exists $m\in \mathbb{N} $ such that $\{f_{m+n}+f_{m+n+1}\}_{n=1}^\infty  $ is linearly independent.
\end{thm}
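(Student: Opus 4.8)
The plan is to reduce Theorem~\ref{mt3} to the situation already treated in Theorem~\ref{mt}, applied not to $F$ itself but to a shifted sequence $F^{(m)}=\{f_{m+n}\}_{n=1}^\infty$, for a well-chosen cut-off $m$. Recall from Theorem~\ref{mt} that a sequence has a Fibonacci representation as soon as the associated ``sum sequence'' is linearly independent; here we want the converse-flavoured statement, namely that after discarding an initial block the sum sequence $\{f_{m+n}+f_{m+n+1}\}_{n=1}^\infty$ becomes linearly independent. The natural dichotomy is: either $F$ is already linearly independent, in which case Lemma~\ref{lemf}$(iii)$ gives that $\{f_n+f_{n+1}\}_{n=1}^\infty$ is linearly independent and we may take $m=0$ (or $m=1$); or $F$ is linearly dependent, and then we must locate the ``last'' place where a dependence can occur.

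\medskip
\noindent\textbf{Step 1 (locating the cut-off).} Assume $F$ is complete and linearly dependent; we may assume $f_1\neq 0$ (if $f_1=0$ one discards it and relabels, or argues directly). By Proposition~\ref{remcom} there exists $m\geqslant 2$ with $f_m\in\spn\{f_n\}_{n=1}^{m-1}$ and $f_{m+1}\notin\spn\{f_n\}_{n=1}^{m}$. Fix this $m$. The claim will be that $\{f_{m+n}+f_{m+n+1}\}_{n=1}^\infty$ is linearly independent. Here I expect one may need to enlarge $m$ slightly (e.g.\ replace $m$ by $m+1$) so that the indices in the following argument line up cleanly, but the essential point is that beyond $f_{m+1}$ no new linear dependence can be ``created'' by the recursion $f_{k+2}=T(f_k+f_{k+1})$ in a way that would collapse the sum sequence.

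\medskip
\noindent\textbf{Step 2 (propagating independence via $T$).} Suppose, for contradiction, that $\{f_{m+n}+f_{m+n+1}\}_{n=1}^\infty$ is linearly dependent. Then there is a smallest $n_0$ with
\[
f_{m+n_0}+f_{m+n_0+1}=\sum_{n=1}^{n_0-1}c_n\,(f_{m+n}+f_{m+n+1}).
\]
Applying $T$ and using that $F$ is represented by $T$ gives $f_{m+n_0+2}=\sum_{n=1}^{n_0-1}c_n f_{m+n+2}\in\spn\{f_j\}_{j=1}^{m+n_0+1}$. Now run exactly the argument in the proof of Theorem~\ref{mt2}: set $V=\spn\{f_j\}_{j=1}^{l}$ for $l$ large enough to contain both $f_{m+1}$'s predecessors and the index $m+n_0+1$, use Lemma~\ref{lemf}$(i)$ together with Proposition~\ref{in} and Corollary~\ref{in2} (which tell us $Tf_i\in\spn\{f_n\}_{n=3}^{m+i+1}$, so that $T$ cannot push vectors out of a large enough finite-dimensional span), and conclude that $V$ is invariant under $T$. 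Then $f_n\in V$ for all $n$, so $\spn\{f_n\}_{n=1}^\infty=V$ is finite-dimensional; since $F$ is complete this forces $\dim\h<\infty$, contradicting the hypothesis. Hence no such dependence exists.

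\medskip
\noindent\textbf{The main obstacle} is bookkeeping: making sure the chosen $m$ simultaneously satisfies the hypotheses of Proposition~\ref{remcom}, Proposition~\ref{in}, and Corollary~\ref{in2}, so that the ``$T$ maps a large finite span into itself'' step goes through without an off-by-one error, and handling the degenerate case $f_1=0$ (or more generally an initial string of zeros) separately. Once the span $V$ is correctly chosen to contain $f_{l+1}$ and to be $T$-invariant, the completeness-versus-infinite-dimensionality contradiction is immediate, essentially verbatim from Theorem~\ref{mt2}. So the real content is Step~1 (extracting the right $m$) plus verifying that the estimates $Tf_i\in\spn\{f_n\}_{n=3}^{m+i+1}$ from Proposition~\ref{in} and Corollary~\ref{in2} are exactly what is needed to close $V$ under $T$.
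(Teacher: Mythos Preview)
Your proposal is correct and follows essentially the same route as the paper: split into the linearly independent case (handled by Lemma~\ref{lemf}$(iii)$) and the linearly dependent case, invoke Proposition~\ref{remcom} to locate $m$, assume a dependence in $\{f_{m+n}+f_{m+n+1}\}_{n=1}^\infty$, apply $T$, and derive that a finite-dimensional $V=\spn\{f_n\}_{n=1}^{m+j+1}$ is $T$-invariant, contradicting completeness in an infinite-dimensional space.

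Two small points. First, your hedging about possibly enlarging $m$ is unnecessary: the $m$ produced by Proposition~\ref{remcom} works as is. Second, you invoke both Proposition~\ref{in} and Corollary~\ref{in2}, but the paper's argument only needs the single consequence $Tf_1\in\spn\{f_n\}_{n=3}^{m+1}$ from Proposition~\ref{in}; once that is in hand, Lemma~\ref{lemf}$(i)$ lets you write any $Tf_n$ ($n\geqslant 2$) as a combination of $Tf_1$ and terms $T(f_{k}+f_{k+1})=f_{k+2}$, so $T$-invariance of $V$ follows without tracking $Tf_{m+i}$ separately. This makes the bookkeeping you were worried about essentially trivial.
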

\begin{proof}
	If $F$ is linearly independent, then the result follows by $(iii)$ of Lemma \ref{lemf}.
	Suppose that $F$ is linearly dependent. Then  by Proposition \ref{remcom} and Proposition \ref{in}, there exists $m\geqslant 2$ such that  $f_m\in \spn\{f_n\}_{n=1}^{m-1}$, $f_{m+1}\notin \spn\{f_n\}_{n=1}^{m-1}$ and $Tf_1 \in \spn\{f_n\}_{n=3}^{m+1}$. We prove $\{f_{m+n}+f_{m+n+1}\}_{n=1}^\infty  $ is  linearly independent. Suppose by contradiction that $\{f_{m+n}+f_{m+n+1}\}_{n=1}^\infty  $ is not linearly independent. Then there exists $j\in \mathbb{N}$ such that $f_{m+j} +f_{m+j+1}=\sum_{n=1}^{j-1} c_n(f_{m+n}+f_{m+n+1}).$ Hence we have
	\begin{equation}\label{17}
	f_{m+j+2}= T(f_{m+j} +f_{m+j+1})=\sum_{n=1}^{j-1} c_nf_{m+n+2} \in \spn\{f_n\}_{n=1}^{m+j+1}.\\
	\end{equation}
	Let $V=\spn\{f_n\}_{n=1}^{m+j+1}$.  We show that $V$ is invariant under $T$. Let $f=\sum_{n=1}^{m+j+1} c_nf_n\in V$. Then by $(i)$ of Lemma \ref{lemf}, we have
	\begin{align*}
	Tf&= c_1 Tf_1 + \sum_{n=2}^{m+j+1} c_n Tf_n\\
	&=c_1Tf_1 + \sum_{n=2}^{m+j+1} c_nT\Big(\sum_{i=0}^{n-2}(-1)^i (f_{n-i-1}+f_{n-i})+(-1)^{n-1} f_1 \Big)\\
	&=c_1Tf_1 + \sum_{n=2}^{m+j+1} c_n \Big(\sum_{i=0}^{n-2}(-1)^i f_{n-i+1}+(-1)^{n-1} Tf_1\Big) \\
	&=\Big(c_1+\sum_{n=2}^{m+j+1} c_n(-1)^{n-1}\Big)Tf_1 + \sum_{n=2}^{j+m+1} c_n\sum_{i=0}^{n-2}(-1)^i f_{n-i+1}.
	\end{align*}
Using $Tf_1 \in \spn\{f_n\}_{n=3}^{m+1} \subseteq V$ and  \eqref{17}, we get $Tf \in V$.
	Then we conclude   $f_n \in V $ for all $n\geqslant  m+j+2$. Thus, $\spn\{f_n\}_{n=1}^\infty=V$ and since $\{f_n\}_{n=1}^\infty$ is complete in $\h$, we have $\h =\overline{\spn}\{f_n\}_{n=1}^\infty=\overline{V}=V$ which is a contradiction.
\end{proof}
\section{Fibonacci Representation Operators}
In a frame that indeed has  the form $\{T^n\varphi\}_{n=0}^\infty$, where
$T \in B(\h)$  and $\varphi\in\h$, all sequence members are represented by iterative actions of $T$ on $\varphi$.
In the case where $\{f_n\}_{n=1}^\infty$ has a Fibonacci representation operator $T$, we expect (Theorem \ref{fib}) all members of the sequence $\{f_n\}_{n=1}^\infty$ to be identified in terms of iterative actions of $T$ on elements $f_1$ and $f_2$.
In this section, we present some results concerning Fibonacci representation operators. One of the  results characterizes  types of frame which can be represented in terms of a bounded operator $T$.
\begin{notation}
	 $[x]$ denotes the  integer part of $x\in\Bbb{R}$ and $\vv{n}{k}:=\frac{n!}{k!(n-k)!}$ for integers $0\leqslant  k\leqslant  n$. We let $\vv{n}{k}:=0$ when $k>n$ or $k<0$.
\end{notation}
\begin{thm}\label{fib}
	Let $T:\spn\{f_n\}_{n=1}^\infty \rightarrow \spn\{f_n\}_{n=1}^\infty $ be a linear operator, then the following statements are equivalent:
	\begin{enumerate}
		\item [$(i)$] $F=\{f_n\}_{n=1}^\infty$ is represented by $T$.
		\item [$(ii)$] $Tf_1+Tf_2=f_3$ and
		\begin{equation}\label{rep}
		f_n=\sum_{i=a_n}^{2a_n}\Big( \vv{i+b_n}{2i-2a_n+b_n} T^{i+b_n}f_2+ \vv{i+b_n}{2i-2a_n+b_n+1} T^{i+b_n+1}f_1\Big),\quad n\geqslant  4,
		\end{equation}
		where  $a_n=[\frac{n-1}{2}]$ , $b_n=n-2a_n-2$.
	\end{enumerate}
\end{thm}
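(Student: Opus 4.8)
The plan is to reduce the stated equivalence to one recursion for the vectors appearing on the right-hand side of \eqref{rep} and then to a double application of Pascal's rule. For $n\geqslant 4$ write $R_n\in\h$ for the right-hand side of \eqref{rep}, and put $R_1:=f_1$, $R_2:=f_2$, $R_3:=Tf_1+Tf_2$. The key claim I would prove is
\begin{equation*}
R_n=T(R_{n-1}+R_{n-2}),\qquad n\geqslant 3. \tag{$\ast$}
\end{equation*}
Granting $(\ast)$, the two implications come out at once. If $(i)$ holds then $f_3=T(f_1+f_2)=R_3$, and since $f_1=R_1$ and $f_2=R_2$, a short induction combining $f_n=T(f_{n-1}+f_{n-2})$ with $(\ast)$ yields $f_n=R_n$ for all $n\geqslant 1$; reading this off for $n=3$ and for $n\geqslant 4$ gives exactly the two assertions of $(ii)$. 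Conversely, $(ii)$ says precisely that $f_n=R_n$ for every $n\geqslant 1$ (the clause $Tf_1+Tf_2=f_3$ being the case $n=3$, and $n=1,2$ trivial), whence for each $n\geqslant 3$ one has $f_n=R_n=T(R_{n-1}+R_{n-2})=T(f_{n-1}+f_{n-2})$, which is $(i)$.

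So the whole proof rests on $(\ast)$. The cases $n=3,4,5$ I would check by direct substitution from the definitions of $R_1,\dots,R_5$; the sums in \eqref{rep} for $n=4,5$ have only two or three terms, so these are short. For $n\geqslant 6$ the terms $R_{n-1},R_{n-2}$ are already given by \eqref{rep}, so $T(R_{n-1}+R_{n-2})$ is an explicit linear combination of the vectors $T^{j}f_1$ and $T^{j}f_2$; it then suffices to show that, after collecting terms, the scalar multiplying $T^{j}f_1$ (resp.\ $T^{j}f_2$) agrees with the one in $R_n$. Here a reindexing of \eqref{rep} helps: putting $j=i+b_m$ in the $f_2$-part and $j=i+b_m+1$ in the $f_1$-part of $R_m$, and using $2a_m+b_m=m-2$, one obtains the uniform description that the coefficient of $T^{j}f_2$ in $R_m$ is $\vv{j}{2j-m+2}$ and the coefficient of $T^{j}f_1$ is $\vv{j-1}{2j-m+1}$, with the binomial convention of the Notation. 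With this, $(\ast)$ for $n\geqslant 6$ becomes, coefficient by coefficient in $j$, the pair of identities
\begin{align*}
\vv{j}{2j-n+2}&=\vv{j-1}{2j-n+1}+\vv{j-1}{2j-n+2},\\
\vv{j-1}{2j-n+1}&=\vv{j-2}{2j-n}+\vv{j-2}{2j-n+1},
\end{align*}
each of which is an instance of Pascal's rule $\vv{m}{k}=\vv{m-1}{k-1}+\vv{m-1}{k}$.

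The main obstacle I anticipate is purely the index bookkeeping: one must confirm that the parity-dependent data $a_n=[\frac{n-1}{2}]$, $b_n=n-2a_n-2$, together with the summation range $a_n\leqslant i\leqslant 2a_n$ in \eqref{rep}, really do collapse under the above substitution into the single formula ``coefficient of $T^{j}f_2$ in $R_m$ equals $\vv{j}{2j-m+2}$'' (and its $f_1$-analogue), and that the edge cases $j\in\{0,1\}$ and $n\leqslant 5$ cause no trouble (the offending binomials vanishing by the sign conventions). It is worth noting that the argument is entirely algebraic: no frame bounds, completeness, or use of the inner product enters, only linearity of $T$ on $\spn\{f_n\}_{n=1}^\infty$ and the elementary identity above.
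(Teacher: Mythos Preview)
Your proposal is correct and rests on the same computational core as the paper's proof---both arguments ultimately reduce to Pascal's identity $\binom{m}{k}=\binom{m-1}{k-1}+\binom{m-1}{k}$ applied to the binomial coefficients in \eqref{rep}. The organization, however, is genuinely tidier than the paper's. The paper treats the two implications separately: for $(i)\Rightarrow(ii)$ it runs an induction on $n$ split into the cases ``$k+1$ even'' and ``$k+1$ odd'', tracking the shifts $a_{k+1}=a_k=1+a_{k-1}$ (resp.\ $1+a_k=a_{k+1}$) and $b_{k\pm1}$ explicitly through each case; for $(ii)\Rightarrow(i)$ it repeats the same parity split to verify $Tf_{n-1}+Tf_n=f_{n+1}$. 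Your single recursion $(\ast)$ for the auxiliary vectors $R_n$ collapses both directions into one statement, and your substitution $j=i+b_m$ (using $2a_m+b_m=m-2$) absorbs the parity of $m$ into the uniform description ``coefficient of $T^jf_2$ in $R_m$ is $\binom{j}{2j-m+2}$, coefficient of $T^jf_1$ is $\binom{j-1}{2j-m+1}$'', so no case split is needed. What the paper's version buys is that every index manipulation is written out, leaving nothing to the reader; what yours buys is a proof that is shorter and makes the role of Pascal's rule transparent. The residual bookkeeping you flag (verifying that the endpoints of the summation range in \eqref{rep} match the support of $\binom{j}{2j-m+2}$ under the paper's sign convention, and the direct checks for $n\leqslant5$) is routine and goes through as you indicate.
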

\begin{proof}$(i)\Rightarrow (ii)$
	We prove \eqref{rep} by induction on $n$. For $n=4$, we have $a_4=1$ and $b_4=0 $. Then
	\begin{align*}
	&\vv{1}{0} Tf_2+ \vv{1}{1} T^{2}f_1+ \vv{2}{2} T^{2}f_2+ \vv{2}{3} T^{3}f_1\\
	&=Tf_2+T(Tf_1+Tf_2)=Tf_2+Tf_3=f_4.
	\end{align*}
Now, assume that $k>4$ and \eqref{rep} holds for all $n\leqslant  k$ and we prove \eqref{rep} for $n=k+1$.
	If $k+1$ is even, then $b_k=-1,~ b_{k-1}=b_{k+1}=0$ and $a_{k+1}=a_k=1+a_{k-1}$. Hence
	\begin{align*}
	f_{k+1}&=Tf_k+Tf_{k-1}\\
	&=\sum_{i=a_k}^{2a_k} \Big( \vv{i-1}{2i-2a_k-1} T^{i}f_2+ \vv{i-1} {2i-2a_k} T^{i+1}f_1 \Big)\\
	&\quad+\sum_{i=a_{k-1}}^{2a_{k-1}} \Big(\vv{i}{2i-2a_{k-1}} T^{i+1}f_2+ \vv{i}{2i-2a_{k-1}+1} T^{i+2}f_1\Big)\\
	&=\sum_{i=a_k}^{2a_{k}} \Big(\vv{i-1}{2i-2a_{k}-1}T^{i}f_2+ \vv{i-1}{2i-2a_{k}} T^{i+1}f_1 \Big)\\
	&\quad+\sum_{i=a_k}^{2a_k-1} \Big(\vv{i-1}{2i-2a_{k}} T^{i}f_2+ \vv{i-1}{2i-2a_{k}+1} T^{i+1}f_1\Big)\\
	&=\sum_{i=a_k}^{2a_{k}} \Big(\vv{i-1}{2i-2a_{k}-1}T^{i}f_2+ \vv{i-1}{2i-2a_{k}} T^{i+1}f_1 \Big)\\
	&\quad+\sum_{i=a_k}^{2a_k} \Big(\vv{i-1}{2i-2a_{k}} T^{i}f_2+ \vv{i-1}{2i-2a_{k}+1} T^{i+1}f_1\Big)\\
	&=\sum_{i=a_k}^{2a_k} \Big(\vv{i}{2i-2a_{k}}  T^{i}f_2+ \vv{i}{2i-2a_{k}+1} T^{i+1}f_1\Big).
	\end{align*}
Since $b_{k+1}=0$ and $a_{k+1}=a_k$, we obtain \eqref{rep}.
	If $k+1$ is odd, then $b_k=0,~ b_{k-1}=b_{k+1}=-1$ and $1+a_{k-1}=1+a_k=a_{k+1}$. Hence
	\begin{align*}
	f_{k+1}&=Tf_k+Tf_{k-1}\\
	&=\sum_{i=a_{k}}^{2a_{k}} \Big( \vv{i}{2i-2a_{k}} T^{i+1}f_2+ \vv{i}{2i-2a_{k}+1} T^{i+2}f_1\Big)\\
	&\quad+\sum_{i=a_{k-1}}^{2a_{k-1}} \Big( \vv{i-1}{2i-2a_{k-1}-1}T^{i}f_2+ \vv{i-1}{2i-2a_{k-1}} T^{i+1}f_1\Big)\\
	&=\sum_{i=1+a_{k+1}}^{2a_{k+1}} \Big( \vv{i-2}{2i-2a_{k+1}-2} T^{i-1}f_2+ \vv{i-2}{2i-2a_{k+1}-1} T^{i}f_1\Big)\\
	&\quad+\sum_{i=a_{k+1}}^{2a_{k+1}-1} \Big( \vv{i-2}{2i-2a_{k+1}-1} T^{i-1}f_2+ \vv{i-2}{2i-2a_{k+1}} T^{i}f_1\Big)\\
	&=\sum_{i=a_{k+1}}^{2a_{k+1}} \Big( \vv{i-2}{2i-2a_{k+1}-2} T^{i-1}f_2+ \vv{i-2}{2i-2a_{k+1}-1} T^{i}f_1\Big)\\
	&\quad+\sum_{i=a_{k+1}}^{2a_{k+1}} \Big( \vv{i-2}{2i-2a_{k+1}-1} T^{i-1}f_2+ \vv{i-2}{2i-2a_{k+1}} T^{i}f_1\Big)\\
	&=\sum_{i=a_{k+1}}^{2a_{k+1}} \Big( \vv{i-1}{2i-2a_{k+1}-1} T^{i-1}f_2+ \vv{i-1}{2i-2a_{k+1}} T^{i}f_1\Big).
	\end{align*}
Hence we get \eqref{rep}. To prove $(ii)\Rightarrow (i)$, there are two possibilities.
	If $n>4$ is odd, then  $b_n=-1,~ b_{n-1}=b_{n+1}=0$ and $a_{n+1}=a_n=1+a_{n-1}$. Hence
	\begin{align*}
	Tf_n&=\sum_{i=a_n}^{2a_n}\Big( \vv{i-1}{2i-2a_{n}-1} T^{i}f_2+\vv{i-1}{2i-2a_{n}}  T^{i+1}f_1\Big),\\
Tf_{n-1}&=\sum_{i=a_{n-1}}^{2a_{n-1}}\Big( \vv{i}{2i-2a_{n-1}} T^{i+1}f_2+ \vv{i}{2i-2a_{n-1}+1}  T^{i+2}f_1\Big)\\
	&=\sum_{i=a_n}^{2a_n-1} \Big(\vv{i-1}{2i-2a_{n}} T^{i}f_2+ \vv{i-1}{2i-2a_{n}+1} T^{i+1}f_1\Big)\\
&=\sum_{i=a_n}^{2a_n} \Big(\vv{i-1}{2i-2a_{n}} T^{i}f_2+ \vv{i-1}{2i-2a_{n}+1} T^{i+1}f_1\Big).
	\end{align*}
	Therefore,
	\begin{align*}
	Tf_n+Tf_{n-1}=\sum_{i=a_n}^{2a_n} \Big(\vv{i}{2i-2a_{n}} T^{i}f_2+ \vv{i}{2i-2a_{n}+1}  T^{i+1}f_1\Big)
	=f_{n+1}.
	\end{align*}
If $n\geqslant 4$ is even, the argument is similar to the previous case.
\end{proof}
\begin{rem}
	We recall that
\[\ell^2(\h):=\Big\{\{f_n\}_{n=1}^\infty\subseteq \h: \sum_{n=1}^\infty \|f_n\|^2<\infty\Big\},\]
 and $\mathcal{T}_L, \mathcal{T}_R:\ell^2(\h) \rightarrow \ell^2(\h)$ are bounded linear operators defined by
 \[\mathcal{T}_L \{f_n\}_{n=1}^\infty= \{f_{n+1}\}_{n=1}^\infty ,\quad \mathcal{T}_R \{f_n\}_{n=1}^\infty= \{0,f_1,f_2,f_3,...\}.\]
\end{rem}
\begin{prop}\label{bound}
	Let $F=\{f_n\}_{n=1}^\infty $ be a Bessel sequence in $\h$ which is represented by $T_0$ and let $M=\{f_n+f_{n+1}\}_{n=1}^\infty$ be a frame for $\h$.  Then $\ker T_M\subseteq \ker T_{\mathcal{T}_L^2F}$ if and only if $T:=T_0|_{\spn\{f_n+f_{n+1}\}_{n=1}^\infty}$ is bounded with $\|T\|\leqslant  \sqrt{\frac{B_F}{A_M}}$, where $B_F$ is a Bessel bound for $F$ and $A_M$ is a lower frame bound for $M$.
\end{prop}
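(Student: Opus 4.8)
The plan is to rely on the factorization that the Fibonacci relation forces between the synthesis operators involved, upgrading it by continuity in one direction and by the pseudo-inverse of $T_M$ in the other. The starting point is the identity
\begin{equation*}
T_{\mathcal{T}_L^2F}(c)=\sum_{n}c_n f_{n+2}=\sum_n c_n T_0(f_n+f_{n+1})=T_0\Big(\sum_n c_n(f_n+f_{n+1})\Big)=T\big(T_M(c)\big),
\end{equation*}
valid for every finitely supported $c=\{c_n\}\in\ell^2$, since then $T_M(c)\in\spn\{f_n+f_{n+1}\}$, where $T_0$ agrees with $T$. Here $T_M$ is bounded and onto $\h$ by Theorem \ref{onto} ($M$ is a frame), $\spn\{f_n+f_{n+1}\}$ is dense in $\h$ (a frame is complete), and $T_{\mathcal{T}_L^2F}$ is bounded with $\|T_{\mathcal{T}_L^2F}\|\leqslant\sqrt{B_F}$, because $\sum_{n\geqslant1}|\langle f,f_{n+2}\rangle|^2\leqslant\sum_{n\geqslant1}|\langle f,f_n\rangle|^2\leqslant B_F\|f\|^2$ shows that $\{f_{n+2}\}_{n=1}^\infty$ is Bessel with bound $B_F$.

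For the ``if'' part I would suppose $T$ is bounded; it then has a bounded extension $\widetilde T\in B(\h)$. The operators $\widetilde T\circ T_M$ and $T_{\mathcal{T}_L^2F}$ are bounded on $\ell^2$ and, by the displayed identity, coincide on the dense subspace of finitely supported sequences, hence on all of $\ell^2$. Thus $c\in\ker T_M$ gives $T_{\mathcal{T}_L^2F}(c)=\widetilde T(0)=0$, i.e. $\ker T_M\subseteq\ker T_{\mathcal{T}_L^2F}$. Note that this implication does not use the bound on $\|T\|$.

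For the ``only if'' part I would suppose $\ker T_M\subseteq\ker T_{\mathcal{T}_L^2F}$ and put $T_M^\dagger:=T_M^*S_M^{-1}$, so that $T_M T_M^\dagger=I_\h$ and, from $A_M I\leqslant S_M$, $\|T_M^\dagger\|=\|S_M^{-1}\|^{1/2}\leqslant A_M^{-1/2}$. Fixing $v\in\spn\{f_n+f_{n+1}\}$, choose a finitely supported $c_0$ with $T_M(c_0)=v$; the displayed identity gives $T_{\mathcal{T}_L^2F}(c_0)=Tv$. Since $T_M^\dagger v\in\ell^2$ also satisfies $T_M(T_M^\dagger v)=v$, we get $T_M^\dagger v-c_0\in\ker T_M\subseteq\ker T_{\mathcal{T}_L^2F}$, hence $T_{\mathcal{T}_L^2F}(T_M^\dagger v)=T_{\mathcal{T}_L^2F}(c_0)=Tv$, and therefore
\begin{equation*}
\|Tv\|=\big\|T_{\mathcal{T}_L^2F}(T_M^\dagger v)\big\|\leqslant\|T_{\mathcal{T}_L^2F}\|\,\|T_M^\dagger\|\,\|v\|\leqslant\sqrt{B_F}\,A_M^{-1/2}\,\|v\|=\sqrt{\tfrac{B_F}{A_M}}\,\|v\|
\end{equation*}
for every $v$ in the dense domain of $T$, so $T$ is bounded with $\|T\|\leqslant\sqrt{B_F/A_M}$.

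I expect the delicate point to be the identity $Tv=T_{\mathcal{T}_L^2F}(T_M^\dagger v)$: it is precisely the kernel hypothesis that makes the value of $T_{\mathcal{T}_L^2F}$ on a representation of $v$ independent of that representation, which is what lets one replace the freely available finitely supported representation $c_0$ of $v$ by the $\ell^2$-optimal one $T_M^\dagger v$ carrying the norm control $\|T_M^\dagger v\|\leqslant A_M^{-1/2}\|v\|$. The remaining ingredients --- $\|T_{\mathcal{T}_L^2F}\|\leqslant\sqrt{B_F}$ from monotonicity of the Bessel sum under shifting the index, and $\|T_M^\dagger\|\leqslant A_M^{-1/2}$ from $A_M I\leqslant S_M$ --- are routine.
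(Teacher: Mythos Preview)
Your proof is correct and follows essentially the same route as the paper's: for the kernel-to-boundedness direction the paper decomposes a finitely supported $c$ orthogonally as $c=d+r$ with $d\in\ker T_M$, $r\in(\ker T_M)^\perp$, kills the $d$-part via the kernel hypothesis, and applies \cite[Lemma 5.5.5]{Ole_Book} to get $A_M\sum|r_n|^2\leqslant\|T_M r\|^2$, which is exactly your use of $T_M^\dagger$ (indeed $r=T_M^\dagger v$, and Lemma~5.5.5 is the statement $\|T_M^\dagger\|\leqslant A_M^{-1/2}$); the converse in both proofs is the same continuity argument passing the bounded $T$ through $T_M$.
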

\begin{proof}
	Let $\ker T_M\subseteq \ker T_{\mathcal{T}_L^2F}$ and
	$f=\sum_{n=1}^{k} c_n (f_n+f_{n+1})$, where $\{c_n\}_{n=1}^\infty  \in\ell^2$ with $c_n=0$ for $n\geqslant  k+1$. Then
	\begin{align*}
	Tf=T\Big(\sum_{n=1}^{k} c_n (f_n+f_{n+1})\Big)
	&=\sum_{n=1}^{k} c_n T(f_n+f_{n+1})\\
	&=\sum_{n=1}^{\infty} c_n f_{n+2}\\
&=\sum_{n=1}^{\infty} d_n f_{n+2} +\sum_{n=1}^{\infty} r_n f_{n+2},
	\end{align*}
	where $\{d_n\}_{n=1}^\infty  \in \ker T_M \subseteq \ker T_{\mathcal{T}_L^2F}$ and $\{r_n\}_{n=1}^\infty  \in (\ker T_M)^\perp$.
	Since $\{d_n\}_{n=1}^\infty  \in \ker T_{\mathcal{T}_L^2F}$, we have $\sum_{n=1}^{\infty} d_n f_{n+2}=0$ and consequently $Tf=\sum_{n=1}^{\infty} r_n f_{n+2}$. Therefore
	by applying \cite[Lemma 5.5.5]{Ole_Book}, we have
	\begin{align*}
	\|Tf\|^2&=\Big\|\sum_{n=1}^{\infty} r_n f_{n+2}\Big\|^2 \leqslant   B_F\sum_{n=1}^{\infty} |r_n|^2 \leqslant   \frac{B_F}{A_M}\Big\|\sum_{n=1}^{\infty} r_n (f_{n}+f_{n+1})\Big\|^2\\
	&= \frac{B_F}{A_M}\Big\|\sum_{n=1}^{\infty} r_n (f_{n}+f_{n+1}) + \sum_{n=1}^{\infty} d_n (f_{n}+f_{n+1})\Big\|^2\\
	&=\frac{B_F}{A_M}\Big\|\sum_{n=1}^{k} c_n (f_{n}+f_{n+1})\Big\|^2=\frac{B_F}{A_M}\|f\|^2.
	\end{align*}
	For the other implication, let $ \{c_n\}_{n=1}^\infty \in \ker T_{M}$. Since $\sum_{n=1}^\infty c_n(f_n+f_{n+1})=0$ and $T$ is bounded, we have
\[0=T\Big(\sum_{n=1}^\infty c_n(f_n+f_{n+1})\Big)=\sum_{n=1}^\infty c_n f_{n+2}.\]
 This means $\{c_n\}_{n=1}^\infty \in \ker T_{\mathcal{T}_L^2F}$.
\end{proof}
\begin{rem}
	In Theorem \ref{exbnd}, the invariance of  $\ker T_F$ under the right-shift operator $\mathcal{T}$ is sufficient condition for the boundedness of $T$. It is obvious that the invariance of  $\ker T_F$ under $\mathcal{T}$ is equivalent with
$\ker T_F\subseteq \ker T_{\mathcal{T}_L F}$. In fact, for $\{c_n\}_{n=1}^\infty\in \ell^2$ we have
	$\mathcal{T}(\{c_n\}_{n=1}^\infty) \in \ker T_F$ if and only if $\{c_n\}_{n=1}^\infty\in \ker T_{\mathcal{T}_LF}.$
\end{rem}
\begin{prop}
	Let $F=\{f_n\}_{n=1}^\infty $ be a Bessel sequence in $\h$ which is represented by $T\in B(\h)$ and $M=\{f_n+f_{n+1}\}_{n=1}^\infty$ be a frame for $\h$. Then $T$ is injective if and only if  $ \ker T_{\mathcal{T}_L^2F}\subseteq \ker T_M$
\end{prop}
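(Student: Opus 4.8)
The plan is to reduce everything to the single factorization identity $T_{\mathcal{T}_L^2F}=T\circ T_M$ on $\ell^2$, after which both implications become a short diagram chase combined with the surjectivity of the synthesis operator of a frame.

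First I would record that both synthesis operators in question are bounded: since $M$ is a frame it is in particular a Bessel sequence, so $T_M:\ell^2\to\h$ is well defined and bounded; and since $F$ is Bessel, so is the shifted sequence $\mathcal{T}_L^2F=\{f_{n+2}\}_{n=1}^\infty$ (the upper frame inequality for $F$ passes to any subsequence), so $T_{\mathcal{T}_L^2F}:\ell^2\to\h$ is also well defined and bounded. Next, for a finitely supported sequence $\{c_n\}_{n=1}^\infty\in\ell^2$, linearity of $T$ together with the Fibonacci relation $T(f_n+f_{n+1})=f_{n+2}$ give
\[
T\Big(T_M\{c_n\}_{n=1}^\infty\Big)=T\Big(\sum_{n=1}^\infty c_n(f_n+f_{n+1})\Big)=\sum_{n=1}^\infty c_n f_{n+2}=T_{\mathcal{T}_L^2F}\{c_n\}_{n=1}^\infty.
\]
Since finitely supported sequences are dense in $\ell^2$ and each of $T$, $T_M$, $T_{\mathcal{T}_L^2F}$ is continuous, this identity extends to all of $\ell^2$, i.e.\ $T_{\mathcal{T}_L^2F}=T\circ T_M$. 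In particular $\ker T_M\subseteq\ker T_{\mathcal{T}_L^2F}$ always holds, which explains why only the reverse inclusion appears in the statement.

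With the factorization in hand both directions are immediate. If $T$ is injective and $\{c_n\}_{n=1}^\infty\in\ker T_{\mathcal{T}_L^2F}$, then $T\big(T_M\{c_n\}_{n=1}^\infty\big)=0$ forces $T_M\{c_n\}_{n=1}^\infty=0$, so $\{c_n\}_{n=1}^\infty\in\ker T_M$; hence $\ker T_{\mathcal{T}_L^2F}\subseteq\ker T_M$. Conversely, assume $\ker T_{\mathcal{T}_L^2F}\subseteq\ker T_M$ and suppose $f\in\h$ satisfies $Tf=0$. Because $M$ is a frame, Theorem \ref{onto} gives that $T_M$ maps $\ell^2$ onto $\h$, so $f=T_M\{c_n\}_{n=1}^\infty$ for some $\{c_n\}_{n=1}^\infty\in\ell^2$. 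Then $T_{\mathcal{T}_L^2F}\{c_n\}_{n=1}^\infty=T\big(T_M\{c_n\}_{n=1}^\infty\big)=Tf=0$, so $\{c_n\}_{n=1}^\infty\in\ker T_{\mathcal{T}_L^2F}\subseteq\ker T_M$, whence $f=T_M\{c_n\}_{n=1}^\infty=0$. Thus $T$ is injective.

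The argument is essentially routine; the one point that needs a word of care is the passage from finitely supported sequences to arbitrary elements of $\ell^2$ when establishing the factorization, and this is exactly where the hypothesis $T\in B(\h)$ (as opposed to boundedness merely on $\spn\{f_n+f_{n+1}\}_{n=1}^\infty$, as in Proposition \ref{bound}) is used. Everything else is formal manipulation plus the surjectivity of the synthesis operator of a frame.
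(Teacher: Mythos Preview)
Your proof is correct and follows essentially the same approach as the paper: both arguments rest on the identity $T\big(\sum_n c_n(f_n+f_{n+1})\big)=\sum_n c_n f_{n+2}$ for $\{c_n\}\in\ell^2$, combined with the surjectivity of $T_M$ coming from $M$ being a frame. The only difference is cosmetic: you package this identity as an explicit factorization $T_{\mathcal{T}_L^2F}=T\circ T_M$ and justify the passage from finitely supported sequences to all of $\ell^2$ via continuity, whereas the paper simply writes the identity and uses it directly in each direction without isolating the factorization or commenting on the continuity step.
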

\begin{proof}
Let $T$ be injective and $\{c_n\}_{n=1}^\infty\in \ker T_{\mathcal{T}_L^2F}$. Then
\[T\Big(\sum_{n=1}^\infty c_n(f_n+f_{n+1})\Big)= \sum_{n=1}^\infty c_nf_{n+2}=0.\]
Since $T$ is injective, we get $\sum_{n=1}^\infty c_n(f_n+f_{n+1})=0$ and consequently $\{c_n\}_{n=1}^\infty\in \ker T_M$. Conversely, assume that $f\in\h$ and $Tf=0$. Since $M=\{f_n+f_{n+1}\}_{n=1}^\infty$ is a frame for $\h$, we  have $f=\sum_{n=1}^\infty c_n(f_n+f_{n+1})$ for some $\{c_n\}_{n=1}^\infty\in\ell^2$. Then
	\[ \sum_{n=1}^\infty c_nf_{n+2}=T\Big(\sum_{n=1}^\infty c_n(f_n+f_{n+1})\Big)=Tf=0.\]
Hence $\{c_n\}_{n=1}^\infty\in \ker T_{\mathcal{T}_L^2F}$ and consequently $\{c_n\}_{n=1}^\infty\in \ker T_M$. This means $f=\sum_{n=1}^\infty c_n(f_n+f_{n+1})=0$ and the proof is completed.
\end{proof}
\begin{prop}
	Let $\{f_n\}_{n=1}^\infty $ be represented by $T$. Then the following hold:
\begin{itemize}
  \item [$(i)$] If $K\in B(\h)$ is injective and has closed range, then $\{Kf_n\}_{n=1}^\infty$ has a Fibonacci representation.
  \item [$(ii)$] If $K\in B(\h)$ is surjective, then $\{K^*f_n\}_{n=1}^\infty$ and $\{KK^*f_n\}_{n=1}^\infty$ have Fibonacci representations.
\end{itemize}
\end{prop}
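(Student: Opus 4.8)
The plan for $(i)$ is to transport the Fibonacci relation along $K$. Since $K\in B(\h)$ is injective, I claim the assignment $Kg\mapsto K\,Tg$ for $g\in\spn\{f_n\}_{n=1}^\infty$ is well defined: if $\sum_i c_iKf_i=\sum_i d_iKf_i$, then $K\big(\sum_i(c_i-d_i)f_i\big)=0$, so $\sum_i(c_i-d_i)f_i=0$ by injectivity, whence $T\big(\sum_ic_if_i\big)=T\big(\sum_id_if_i\big)$ and the two prescribed values agree. This defines a linear operator $\widetilde T:\spn\{Kf_n\}_{n=1}^\infty\to\spn\{Kf_n\}_{n=1}^\infty$ (the target is correct because $Tg\in\spn\{f_n\}_{n=1}^\infty$ forces $K\,Tg\in K\big(\spn\{f_n\}_{n=1}^\infty\big)=\spn\{Kf_n\}_{n=1}^\infty$), and for every $n\geqslant1$
\[\widetilde T(Kf_n+Kf_{n+1})=\widetilde T\big(K(f_n+f_{n+1})\big)=K\,T(f_n+f_{n+1})=Kf_{n+2},\]
so $\{Kf_n\}_{n=1}^\infty$ is represented by $\widetilde T$. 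The closed range hypothesis enters only if one wants boundedness: by the bounded inverse theorem $K^{-1}\in B(\ran K,\h)$, so $\widetilde T=K\circ T\circ K^{-1}$ on $\spn\{Kf_n\}_{n=1}^\infty$ and $\widetilde T$ is bounded whenever $T$ is.

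For $(ii)$ I would deduce both statements from $(i)$. Since $K\in B(\h)$ is surjective, $K^*$ is bounded below, i.e., there is $c>0$ with $\|K^*f\|\geqslant c\|f\|$ for all $f\in\h$; in particular $K^*$ is injective with closed range, so $(i)$ applied with $K^*$ in place of $K$ gives a Fibonacci representation of $\{K^*f_n\}_{n=1}^\infty$. For the second sequence, the same lower bound yields $\langle KK^*f,f\rangle=\|K^*f\|^2\geqslant c^2\|f\|^2$ for all $f\in\h$, so the positive self-adjoint operator $KK^*$ satisfies $\|KK^*f\|\geqslant c^2\|f\|$; hence $KK^*$ is injective with closed range (indeed boundedly invertible). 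Applying $(i)$ to the original sequence $\{f_n\}_{n=1}^\infty$ with $KK^*$ playing the role of $K$ then produces a Fibonacci representation of $\{KK^*f_n\}_{n=1}^\infty$.

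The argument is short, and I do not anticipate a genuine obstacle; the two points that need care are the following. First, that the operator $\widetilde T$ in $(i)$ is well defined and actually maps $\spn\{Kf_n\}_{n=1}^\infty$ into itself — this is exactly where injectivity of $K$ (rather than its continuity) is used, together with the well-definedness of $T$ on $\spn\{f_n\}_{n=1}^\infty$. Second, the passage from surjectivity of $K$ to injectivity-with-closed-range of both $K^*$ and $KK^*$, which rests only on the standard duality between surjectivity and boundedness below. Once these are in place, the Fibonacci identity transfers verbatim.
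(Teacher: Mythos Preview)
Your argument is correct and follows essentially the same route as the paper: in $(i)$ the paper invokes the Open Mapping Theorem to obtain a bounded left inverse $S:\ran(K)\to\h$ with $SK=I_{\h}$ and then takes $KTS$ as the representation operator, which on $\spn\{Kf_n\}_{n=1}^\infty$ coincides with your $\widetilde T$; in $(ii)$ the paper likewise reduces to $(i)$ after noting that $K^*$ is injective with closed range and that $KK^*$ is invertible. Your construction in $(i)$ is slightly more elementary in that it uses only injectivity of $K$ to secure well-definedness of $\widetilde T$, and your remark that the closed-range hypothesis is needed only for boundedness of $\widetilde T$ (not for the mere existence of a Fibonacci representation) is a valid sharpening of what the paper actually proves.
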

\begin{proof}
	$(i)$ By Open Mapping Theorem, there exists a bonded linear operator $S:\ran(K)\rightarrow \h$ such that $SK=I_{\h}$. Therefore
	\begin{equation*}
	KTS(Kf_n+Kf_{n-1})=KT(f_n+f_{n-1})=Kf_{n+1},\quad n\geqslant 2.
\end{equation*}
\par
To prove $(ii)$, by \cite[Lemma 2.4.1]{Ole_Book}, $K^*$ is injective and has closed range. Also $KK^*$ is invertible.  Then $(i)$ implies $(ii)$.
\end{proof}
\begin{prop}
	Let $\{f_n\}_{n=1}^\infty $ be a frame for $\h$ and represented by $T\in B(\h)$. If $Tf_1\in\spn\{f_n\}_{n=3}^\infty$,  then $\ran(T)$ is closed and $\ran(T)=\overline{\spn}\{Tf_n\}_{n=1}^\infty=\overline{\spn}\{f_n\}_{n=3}^\infty$.
\end{prop}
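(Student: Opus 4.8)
The plan is to identify the algebraic span of $\{Tf_n\}_{n=1}^\infty$, derive two inclusions for $\ran(T)$, and then promote the weaker one to an equality by a surjectivity argument. First I would show that $Tf_n\in\spn\{f_k\}_{k=3}^\infty$ for every $n\geqslant 1$: this holds for $n=1$ by hypothesis, and since $f_{n+1}=T(f_n+f_{n-1})=Tf_n+Tf_{n-1}$ for $n\geqslant 2$ (and $f_3=Tf_1+Tf_2$), the identity $Tf_n=f_{n+1}-Tf_{n-1}$ lets one induct on $n$. Conversely, $f_n=Tf_{n-1}+Tf_{n-2}\in\spn\{Tf_k\}_{k=1}^\infty$ for $n\geqslant 3$. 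Hence $\spn\{Tf_n\}_{n=1}^\infty=\spn\{f_n\}_{n=3}^\infty$; writing $\h_0:=\overline{\spn}\{f_n\}_{n=3}^\infty$, this gives $\overline{\spn}\{Tf_n\}_{n=1}^\infty=\h_0$.

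Next I would record the two inclusions. From $f_n=T(f_{n-1}+f_{n-2})$ for $n\geqslant 3$ we get $\spn\{f_n\}_{n=3}^\infty\subseteq\ran(T)$. On the other hand a frame is complete, so $\spn\{f_n\}_{n=1}^\infty$ is dense in $\h$, and since $T$ is bounded, $\ran(T)=T\bigl(\overline{\spn}\{f_n\}_{n=1}^\infty\bigr)\subseteq\overline{\spn\{Tf_n\}_{n=1}^\infty}=\h_0$. Thus $\spn\{f_n\}_{n=3}^\infty\subseteq\ran(T)\subseteq\h_0$, so $\overline{\ran(T)}=\h_0$; it remains only to prove that $\ran(T)$ is closed, equivalently that $\h_0\subseteq\ran(T)$.

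For this last step I would use the standard fact that deleting finitely many vectors from a frame for $\h$ leaves a frame for the closed linear span of the remaining vectors (see, e.g., \cite{Ole_Book}); hence $\{f_n\}_{n=3}^\infty$ is a frame for $\h_0$. Given $g\in\h_0$, write $g=\sum_{n=3}^\infty c_nf_n$ with $\{c_n\}_{n=3}^\infty\in\ell^2$. Since $F$ is a frame it is Bessel, so by Proposition \ref{lem1}$(i)$ the sequence $\{f_{n-1}+f_{n-2}\}_{n=3}^\infty=\{f_n+f_{n+1}\}_{n=1}^\infty$ is Bessel; therefore $h:=\sum_{n=3}^\infty c_n(f_{n-1}+f_{n-2})$ converges in $\h$, and boundedness and linearity of $T$ give $Th=\sum_{n=3}^\infty c_n\,T(f_{n-1}+f_{n-2})=\sum_{n=3}^\infty c_nf_n=g$. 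Hence $\h_0\subseteq\ran(T)$, and combining with the previous step, $\ran(T)=\h_0=\overline{\spn}\{Tf_n\}_{n=1}^\infty=\overline{\spn}\{f_n\}_{n=3}^\infty$, which in particular is closed.

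The crux is the passage from ``$\ran(T)$ dense in $\h_0$'' to ``$\ran(T)=\h_0$'', i.e., the closedness of the range; this is exactly where both the frame hypothesis and the boundedness of $T$ are used, through the surjectivity trick above. Alternatively, feeding the frame expansion of an arbitrary $g\in\h$ into the same computation shows that $\h_0=\ran(T)+\bigl(\h_0\cap\spn\{f_1,f_2\}\bigr)$, so $\ran(T)$ has codimension at most $2$ in $\h_0$ and is therefore the closed range of the bounded operator $T\colon\h\to\h_0$.
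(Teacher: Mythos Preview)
Your proof is correct and follows essentially the same approach as the paper's: both establish $\spn\{Tf_n\}_{n=1}^\infty=\spn\{f_n\}_{n=3}^\infty$ (you by direct induction via $Tf_n=f_{n+1}-Tf_{n-1}$, the paper by invoking Lemma~\ref{lemf}$(ii)$ applied to $\{Tf_n\}$), then show $\ran(T)\subseteq\overline{\spn}\{Tf_n\}$ by boundedness and $\overline{\spn}\{f_n\}_{n=3}^\infty\subseteq\ran(T)$ by the same surjectivity trick with $g=\sum c_nf_{n+2}=T\bigl(\sum c_n(f_n+f_{n+1})\bigr)$. You are in fact more explicit than the paper on two points it leaves tacit: that $\{f_n\}_{n=3}^\infty$ is a frame for its closed span, and that $\{f_n+f_{n+1}\}$ is Bessel so the preimage series converges.
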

\begin{proof}
	For each $f\in\h$, there exists $\{c_n\}_{n=1}^\infty\in\ell^2$ such that $f=\sum_{n=1}^\infty c_nf_n$. Then
$Tf=\sum_{n=1}^\infty c_nTf_n\in \overline{\spn}\{Tf_n\}_{n=1}^\infty$, and therefore $\ran(T)\subseteq\overline{\spn}\{Tf_n\}_{n=1}^\infty$. On the other hand, for $g\in\overline{\spn}\{f_n\}_{n=3}^\infty$ there exists $\{c_n\}_{n=1}^\infty\in\ell^2$ such that
\[g=\sum_{n=1}^\infty c_nf_{n+2}=\sum_{n=1}^\infty c_n T(f_n+f_{n+1})=T\Big(\sum_{n=1}^\infty c_n (f_n+f_{n+1})\Big)\in\ran (T).\]
Then $\overline{\spn}\{f_n\}_{n=3}^\infty\subseteq\ran(T)$. Since $Tf_1\in\spn\{f_n\}_{n=3}^\infty$, we can now apply $(ii)$ of Lemma \ref{lemf}
to conclude that
	\begin{equation*}
	\spn\{Tf_n\}_{n=1}^\infty=\spn\Big\{\{Tf_1\}\cup\{Tf_n+Tf_{n+1}\}_{n=1}^\infty\Big\}=\spn\{f_n\}_{n=3}^\infty.
\end{equation*}
Therefore $\ran(T)=\overline{\spn}\{Tf_n\}_{n=1}^\infty=\overline{\spn}\{f_n\}_{n=3}^\infty$.
\end{proof}
\begin{prop}
	Let $\{f_n\}_{n=1}^\infty $ be a linearly dependent frame sequence  represented by $T\in B(\K)$, where $\K=\overline{\spn}\{f_n\}_{n=1}^\infty$ is an infinite dimensional Hilbert space. Then $\ran(T)$ is closed and $\ran(T)=\overline{\spn}\{f_n\}_{n=3}^\infty$.
\end{prop}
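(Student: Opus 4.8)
The plan is to derive this from the preceding proposition, applied with the ambient Hilbert space taken to be $\K$ itself rather than some larger space. Since $\K=\overline{\spn}\{f_n\}_{n=1}^\infty$, the sequence $F$ is complete in $\K$; by hypothesis $F$ is a frame for $\K$, it is represented by $T\in B(\K)$, and $\dim\K=\infty$. Hence all the hypotheses of the preceding proposition are already in force save one, and the entire argument reduces to verifying that
\[
Tf_1\in\spn\{f_n\}_{n=3}^\infty .
\]

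To check this I would split into two cases. If $f_1=0$, then $Tf_1=T0=0\in\spn\{f_n\}_{n=3}^\infty$ and nothing is to be proved. If $f_1\neq0$, then $F$ is a complete, linearly dependent sequence with $f_1\neq0$ in the infinite-dimensional space $\K$, so Proposition \ref{remcom} supplies an integer $m\geqslant2$ with $f_m\in\spn\{f_n\}_{n=1}^{m-1}$ and $f_{m+1}\notin\spn\{f_n\}_{n=1}^{m}$. Because $\spn\{f_n\}_{n=1}^{m-1}\subseteq\spn\{f_n\}_{n=1}^{m}$, the latter forces in particular $f_{m+1}\notin\spn\{f_n\}_{n=1}^{m-1}$, so Proposition \ref{in} applies and yields $Tf_i\in\spn\{f_n\}_{n=3}^{m+1}$ for $1\leqslant i\leqslant m$; taking $i=1$ gives $Tf_1\in\spn\{f_n\}_{n=3}^{m+1}\subseteq\spn\{f_n\}_{n=3}^\infty$. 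With this inclusion established in either case, the preceding proposition (with $\h$ replaced by $\K$) immediately gives that $\ran(T)$ is closed and $\ran(T)=\overline{\spn}\{Tf_n\}_{n=1}^\infty=\overline{\spn}\{f_n\}_{n=3}^\infty$, which is the assertion.

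I do not expect a real obstacle here, as the analytic content has been packaged into the earlier results; the only points needing attention are that Propositions \ref{remcom} and \ref{in} are being invoked inside the subspace $\K$ (which is legitimate, since they are stated for an arbitrary Hilbert space), the elementary observation that the conclusion $f_{m+1}\notin\spn\{f_n\}_{n=1}^{m}$ of Proposition \ref{remcom} is stronger than the hypothesis $f_{m+1}\notin\spn\{f_n\}_{n=1}^{m-1}$ required by Proposition \ref{in}, and the trivial separate treatment of $f_1=0$. As an alternative route not citing the preceding proposition, one could combine Corollary \ref{ranspn}$(ii)$, which gives $\ran(T)=\spn\{f_n\}_{n=3}^\infty$ algebraically, with the fact that $M=\{f_n+f_{n+1}\}_{n=1}^\infty$ is Bessel by Proposition \ref{lem1}$(i)$, so that any $g\in\overline{\spn}\{f_n\}_{n=3}^\infty$ written as $g=\sum_n c_nf_{n+2}$ with $\{c_n\}\in\ell^2$ equals $T\big(\sum_n c_n(f_n+f_{n+1})\big)\in\ran(T)$; together these force $\ran(T)=\overline{\spn}\{f_n\}_{n=3}^\infty$ and hence its closedness.
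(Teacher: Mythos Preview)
Your argument is correct. The paper, however, takes the route you describe at the end as an ``alternative'': it restricts $T$ to $\spn\{f_n\}_{n=1}^\infty$, invokes Corollary~\ref{ranspn}$(ii)$ to get $\ran(T_0)=\spn\{f_n\}_{n=3}^\infty$ and hence $\ran(T)\subseteq\overline{\spn}\{f_n\}_{n=3}^\infty$, and then uses the frame property to write any $g\in\overline{\spn}\{f_n\}_{n=3}^\infty$ as $\sum_n c_nf_{n+2}=T\bigl(\sum_n c_n(f_n+f_{n+1})\bigr)$, giving the reverse inclusion. Your primary route instead unpacks just enough of Corollary~\ref{ranspn}$(ii)$ (namely Propositions~\ref{remcom} and~\ref{in}) to secure $Tf_1\in\spn\{f_n\}_{n=3}^\infty$ and then hands everything off to the preceding proposition. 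The two approaches rest on the same ingredients and are of comparable length; yours has the minor advantage of treating $f_1=0$ explicitly (a case the paper's chain through Proposition~\ref{remcom} silently skips), while the paper's has the advantage of citing a single packaged corollary rather than reassembling its proof.
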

\begin{proof}
Let $T_0$  be the restriction of $T$ on  $\spn\{f_n\}_{n=1}^\infty$. Then by $(ii)$ of Corollary \ref{ranspn}, we have
$\ran (T_0)=\spn\{f_n\}_{n=3}^\infty$, and therefore $\ran(T)\subseteq\overline{\spn}\{f_n\}_{n=3}^\infty$.
On the other hand, Since $\{f_n\}_{n=1}^\infty $ is a frame sequence, for each $f\in\overline{\spn}\{f_n\}_{n=3}^\infty$, there exists $\{c_n\}_{n=1}^\infty\in\ell^2$ such that
\[f=\sum_{n=1}^\infty c_nf_{n+2}=\sum_{n=1}^\infty c_n(Tf_{n}+Tf_{n+1})=T\Big(\sum_{n=1}^\infty c_n(f_{n}+f_{n+1})\Big)\in\ran (T).\]
Hence $\overline{\spn}\{f_n\}_{n=3}^\infty\subseteq \ran(T)$ and this completes the proof.
\end{proof}
\begin{thm}\label{T=S}
	Let $\{f_n\}_{n=1}^\infty $ be represented by $T$ and $S$. If $Tf_1=Sf_1$, then $T=S$.
\end{thm}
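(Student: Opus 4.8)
The plan is to show that $T$ and $S$ agree on the generating set $\{f_n\}_{n=1}^\infty$ of their common domain $\spn\{f_n\}_{n=1}^\infty$; since both are linear, this forces $T=S$. To this end set $g_n:=Tf_n-Sf_n\in\spn\{f_n\}_{n=1}^\infty$ for $n\in\mathbb{N}$, so that the goal becomes $g_n=0$ for every $n$.

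First I would record a recurrence for the $g_n$. Because $F$ is represented by both $T$ and $S$, for every $n\geqslant 1$ we have $f_{n+2}=T(f_n+f_{n+1})=S(f_n+f_{n+1})$; subtracting and using linearity gives
\begin{equation*}
g_n+g_{n+1}=(Tf_n-Sf_n)+(Tf_{n+1}-Sf_{n+1})=T(f_n+f_{n+1})-S(f_n+f_{n+1})=0,\qquad n\geqslant 1.
\end{equation*}
Next I would feed in the hypothesis to start an induction. By assumption $g_1=Tf_1-Sf_1=0$; taking $n=1$ in the recurrence yields $g_2=-g_1=0$, and then induction on $n$ gives $g_{n+1}=-g_n=0$ for all $n\geqslant 1$. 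Hence $Tf_n=Sf_n$ for every $n$, so $T$ and $S$ coincide on $\spn\{f_n\}_{n=1}^\infty$, i.e. $T=S$.

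There is no genuine obstacle here; the only point requiring a moment of care is that the defining relation $f_{n+2}=T(f_n+f_{n+1})$ is available for \emph{all} $n\geqslant 1$ (this is precisely the range $n+2\geqslant 3$ of the definition), so the recurrence for $g_n$ holds already from $n=1$, which is what lets the induction propagate the single datum $g_1=0$. Note that the hypothesis $Tf_1=Sf_1$ is essential: without it the recurrence only yields $g_{n+1}=(-1)^n g_1$, so distinct Fibonacci representations of the same sequence are exactly measured by the value of the operator on $f_1$, consistently with the family of examples constructed earlier (e.g. via $S(\sum c_if_i)=\sum c_iTf_i+\sum(-1)^ic_i g$).
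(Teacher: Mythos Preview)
Your proof is correct and follows essentially the same approach as the paper: the paper also observes that $T(f_n+f_{n+1})=f_{n+2}=S(f_n+f_{n+1})$ for all $n$ and combines this with $Tf_1=Sf_1$ to deduce $Tf_n=Sf_n$ for every $n$, hence $T=S$ on $\spn\{f_n\}_{n=1}^\infty$. The only cosmetic difference is that the paper appeals to part~(i) of Lemma~\ref{lemf} (the telescoping identity expressing $f_n$ in terms of $f_1$ and the sums $f_{k}+f_{k+1}$), whereas you unwind the same information as the one-step recurrence $g_{n+1}=-g_n$ and induct directly; these are equivalent formulations of the same argument.
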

\begin{proof}
	Since $Tf_1=Sf_1$ and $T(f_n+f_{n+1})=f_{n+2}=S(f_n+f_{n+1})$ for all $n\in\Bbb{N}$, we get $Tf_n=Sf_n$ for all $n\in\Bbb{N}$ (we can use $(i)$ of  Lemma \ref{lemf}). This proves $T=S$ on $\spn \{f_n\}_{n=1}^\infty$.
\end{proof}
\begin{cor}
Let $\{f_n\}_{n=1}^\infty $ be represented by $T$ and $S$. If $f_1\in\spn \{f_n+f_{n+1}\}_{n=k}^\infty$ for some $k\in\Bbb{N}$, then $T=S$.
\end{cor}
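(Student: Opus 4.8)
The plan is to reduce everything to Theorem~\ref{T=S}, which says that two Fibonacci representation operators of $\{f_n\}_{n=1}^\infty$ coincide as soon as they agree on $f_1$. So the whole task is to verify $Tf_1=Sf_1$, and then invoke that theorem.

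To see $Tf_1=Sf_1$, I would use the hypothesis $f_1\in\spn\{f_n+f_{n+1}\}_{n=k}^\infty$: since a span consists of \emph{finite} linear combinations, there is a finite index set $J\subseteq\{k,k+1,\dots\}$ and scalars $\{c_n\}_{n\in J}$ with $f_1=\sum_{n\in J}c_n(f_n+f_{n+1})$. For each $n\in J$ we have $n\geqslant k\geqslant 1$, which is exactly the range in which the defining relation of a Fibonacci representation applies, so $T(f_n+f_{n+1})=f_{n+2}=S(f_n+f_{n+1})$. Applying the linear operators $T$ and $S$ to the expansion of $f_1$ and using linearity then gives
\[
Tf_1=\sum_{n\in J}c_n\,T(f_n+f_{n+1})=\sum_{n\in J}c_n f_{n+2}=\sum_{n\in J}c_n\,S(f_n+f_{n+1})=Sf_1,
\]
and Theorem~\ref{T=S} yields $T=S$ on $\spn\{f_n\}_{n=1}^\infty$. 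Equivalently, one may observe that $T$ and $S$ already agree on every $f_n+f_{n+1}$ with $n\geqslant k$, hence on all of $\spn\{f_n+f_{n+1}\}_{n=k}^\infty$, which contains $f_1$ by assumption.

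There is essentially no obstacle here; the only point that requires a moment's care is the index bookkeeping, namely that the lower restriction $n\geqslant k\geqslant 1$ keeps us within the range where $f_{n+2}=T(f_n+f_{n+1})$ is guaranteed, so no separate treatment of small indices is needed. This is why the corollary follows immediately from Theorem~\ref{T=S} once the expansion of $f_1$ is written out.
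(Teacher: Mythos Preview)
Your proof is correct and follows essentially the same approach as the paper's own proof: write $f_1$ as a finite linear combination of vectors $f_n+f_{n+1}$ with $n\geqslant k$, use $T(f_n+f_{n+1})=f_{n+2}=S(f_n+f_{n+1})$ to conclude $Tf_1=Sf_1$, and then apply Theorem~\ref{T=S}. The only difference is notational (you use an index set $J$ where the paper writes $\sum_{n=k}^m$).
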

\begin{proof}
	Since $f_1\in\spn \{f_n+f_{n+1}\}_{n=k}^\infty$, we have $f_1=\sum_{n=k}^m c_n(f_n+f_{n+1})$ for some scalars $c_k,\cdots,c_m$. Then
\[Tf_1=T\Big(\sum_{n=k}^m c_n(f_n+f_{n+1})\Big)=\sum_{n=k}^m c_nT(f_n+f_{n+1})=\sum_{n=k}^m c_nf_{n+2}=\sum_{n=k}^m c_nS(f_n+f_{n+1})=Sf_1.\]
 Therefore $T=S$ by Theorem \ref{T=S}.
\end{proof}

\end{document}